\theoremstyle{plain} 
\newtheorem{theorem}{Theorem}[section]
\newtheorem{lemma}[theorem]{Lemma}
\newtheorem{proposition}[theorem]{Proposition}
\newtheorem{excer}[theorem]{Excercises}
\newtheorem{remark}[theorem]{Remark}
\newtheorem{corollary}[theorem]{Corollary}
\newtheorem*{rem*}{Remark}
\newtheorem*{def*}{Definition}
\newtheorem*{thm*}{Theorem}
\newcommand{\be}{\begin{equation}}
\newcommand{\ene}{\end{equation}}
\newcommand{\br}{\begin{remark}}
\newcommand{\er}{\end{remark}}
\newcommand{\bl}{\begin{lem}}
\newcommand{\el}{\end{lem}}
\newcommand{\bcor}{\begin{cor}}
\newcommand{\ecor}{\end{cor}}
\newcommand{\bpro}{\begin{pro}}
\newcommand{\epro}{\end{pro}}
\newcommand{\ben}{\begin{enumerate}}
\newcommand{\een}{\end{enumerate}}
\newcommand{\bp}{\begin{proof}}
\newcommand{\ep}{\end{proof}}
\newcommand{\bpo}{\begin{pro}}
\newcommand{\epo}{\end{pro}}
\newcommand{\beq}{\begin{equation*}}
\newcommand{\eeq}{\end{equation*}}
\newcommand{\bear}{\begin{eqnarray}}
\newcommand{\eear}{\end{eqnarray}}
\newcommand{\beqar}{\begin{eqnarray*}}
\newcommand{\eeqar}{\end{eqnarray*}}
\newcommand{\bt}{\begin{theorem}}
\newcommand{\et}{\end{theorem}}
\newcommand{\bex}{\begin{excer}}
\newcommand{\eex}{\end{excer}}
\newcommand{\R}{\mathbb{R}}
\newcommand{\sM}{\mathcal{M}}
\newcommand{\psM}{\partial \mathcal{M}}
\DeclareMathOperator{\area}{Area}
\DeclareMathOperator{\diam}{Diam}
\DeclareMathOperator{\spg}{SpG}
\begin{document}
\title[Spectral gaps for large genus]{Degenerating hyperbolic surfaces and spectral gaps for large genus}

\author{Yunhui Wu}
\address[Y. ~W. ]{Tsinghua University, Haidian District, Beijing 100084, China}
\email{yunhui\_wu@mail.tsinghua.edu.cn}

\author{Haohao Zhang}
\address[H. ~Z. ]{Tsinghua University, Haidian District, Beijing 100084, China}
\email{zhh21@mails.tsinghua.edu.cn}

\author{Xuwen Zhu}
\address[X. ~Z. ]{Northeastern University, Boston, MA 02115, USA}
\email{x.zhu@northeastern.edu}

%\date{\today}

\begin{abstract}
In this article we study the differences of two consecutive eigenvalues $\lambda_{i}-\lambda_{i-1}$ up to $i=2g-2$ for the Laplacian on hyperbolic surfaces of genus $g$, and show that the supremum of such spectral gaps over the moduli space has infimum limit at least $\frac{1}{4}$ as genus goes to infinity. A min-max principle for eigenvalues on degenerating hyperbolic surfaces is also established. 
\end{abstract}

\maketitle

%\vspace{-.1in}
%\center{Due on  Jun-02}

\section{Introduction}\label{int}
For a closed Riemann surface $X_g$ of genus $g\geq 2$, consider the hyperbolic metric uniquely determined by its complex structure. We study 
the spectrum of the Laplacian on $X_g$, which is a discrete subset in $\mathbb{R}^{\geq 0}$ and consists of eigenvalues with finite multiplicities. The eigenvalues, counted with multiplicities, are listed in the following increasing order
\[0=\lambda_0(X_g)<\lambda_1(X_g)\leq \lambda_2(X_g) \leq \cdots \to \infty.\]
Let $\sM_g$ be the moduli space of Riemann surfaces of genus $g$, which is an open orbifold of dimension equal to $6g-6$. For each index $i$, the $i$-th eigenvalue $\lambda_i(\cdot)$ is a bounded continuous function on $\sM_g$. In this paper we study the differences of two consecutive eigenvalues and will focus on the behavior of such spectral gaps when genus $g\rightarrow \infty$.

\begin{def*}
For all $i\geq 1$, the \emph{$i$-th spectral gap} $\spg_i(\cdot)$ is a bounded continuous function over the moduli space $\sM_g$ defined as follows.
\beqar
\spg_i:\sM_g &\to& \R^{\geq 0} \\
X_g &\mapsto& \lambda_i(X_g)-\lambda_{i-1}(X_g).
\eeqar
\end{def*}
\noindent By definition $\spg_1(X_g)=\lambda_1(X_g)$. For all $i\geq 1$, the $i$-th spectral gap $\spg_i(\cdot)$ can be arbitrarily close to $0$ (e.g. see Proposition \ref{spg=0}). In this paper we mainly study the quantity $\sup \limits_{X_g \in \sM_g} \spg_{i}(X_g)$ for large $g$ and a family of indices $i$'s. 

The main result of this article is the limiting behavior of the lower bound of the spectral gaps:

\bt\label{mt-1}
Let $\{\eta(g)\}_{g=2}^{\infty}$ be any sequence of integers with $\eta(g)\in [1,2g-2]$, then
\[\liminf_{g\to \infty}  \sup_{X_g \in \sM_g} \spg_{\eta(g)}(X_g)\geq \frac{1}{4}.\]
\et
\begin{rem*} 
The sequence $\{\eta(g)\}$ is arbitrary as long as it satisfies the bounds, examples including $\eta(g)\equiv 2$, $\eta(g) =\{2,3,2,3,\dots\}$, or $\eta(g)= 2g-2$. 
\end{rem*}
On the other hand, by \cite[Corollary 2.3]{Cheng-75} we know that 
\[\lambda_i(X_g)\leq \frac{1}{4}+i^2\cdot \frac{16 \pi^2}{\diam^2(X_g)}.\]
By Gauss-Bonnet, $\area(X_g)=4\pi(g-1)$. A simple area argument implies that the diameter $\diam(X_g)\geq C\ln (g)$ for some universal constant $C>0$. So if $\eta(g)$ satisfies $\lim \limits_{g\to \infty} \frac{\eta(g)}{\ln(g)}=0$, we have 
\[\limsup_{g\to \infty}  \sup_{X_g \in \sM_g} \spg_{\eta(g)}(X_g)\leq \frac{1}{4}.\]
Together with Theorem \ref{mt-1} this yields the following direct consequence.
\begin{corollary}\label{mt-2}
If $\eta(g)=o(\ln (g))$, then
\[\lim_{g\to \infty}  \sup_{X_g \in \sM_g} \spg_{\eta(g)}(X_g)= \frac{1}{4}.\]
\end{corollary}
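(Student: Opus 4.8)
The plan is to sandwich the quantity $\sup_{X_g\in\sM_g}\spg_{\eta(g)}(X_g)$ between a lower bound coming from Theorem~\ref{mt-1} and an upper bound coming from Cheng's eigenvalue comparison together with a logarithmic lower bound on the diameter. First observe that, since $\eta(g)=o(\ln g)$ and $\eta(g)\ge 1$ is an integer, we have $1\le \eta(g)\le 2g-2$ for all sufficiently large $g$; hence Theorem~\ref{mt-1} applies to the index function $\eta(g)$ and gives
\[\liminf_{g\to\infty}\ \sup_{X_g\in\sM_g}\spg_{\eta(g)}(X_g)\ \ge\ \frac14 .\]
It remains to establish the matching bound on the $\limsup$.

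For the upper bound, fix $X_g\in\sM_g$. Since $\lambda_{\eta(g)-1}(X_g)\ge 0$, we have $\spg_{\eta(g)}(X_g)\le \lambda_{\eta(g)}(X_g)$, and combining this with \cite[Corollary~2.3]{Cheng-75} (quoted above) yields
\[\spg_{\eta(g)}(X_g)\ \le\ \frac14+\eta(g)^2\cdot\frac{16\pi^2}{\diam^2(X_g)}.\]
Next I would record the uniform diameter bound: by Gauss--Bonnet $\area(X_g)=4\pi(g-1)$, while the ball of radius $\diam(X_g)$ about any point covers $X_g$ and, via the covering map $\mathbb{H}^2\to X_g$, has area at most $2\pi(\cosh\diam(X_g)-1)$. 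Thus $\cosh\diam(X_g)\ge 2g-1$, which forces $\diam(X_g)\ge C\ln g$ for a universal constant $C>0$ (one may take $C=1$) and all $g\ge 2$. Feeding this into the previous display and taking the supremum over $\sM_g$ gives
\[\sup_{X_g\in\sM_g}\spg_{\eta(g)}(X_g)\ \le\ \frac14+\frac{16\pi^2}{C^2}\cdot\frac{\eta(g)^2}{\ln^2 g}.\]

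Finally, the hypothesis $\eta(g)=o(\ln g)$ says precisely that $\eta(g)^2/\ln^2 g\to 0$, so the error term disappears in the limit and $\limsup_{g\to\infty}\sup_{X_g\in\sM_g}\spg_{\eta(g)}(X_g)\le \tfrac14$; together with the liminf bound this proves the asserted equality. This argument is a direct assembly of facts that are either classical (Cheng's comparison, Gauss--Bonnet, hyperbolic volume comparison) or already proved (Theorem~\ref{mt-1}), so I do not expect a genuine obstacle; the only points demanding a little care are the elementary area/volume estimate producing the $\log$-diameter lower bound and the remark that $\eta(g)=o(\ln g)$ forces $\eta(g)\le 2g-2$ eventually, which is what legitimizes invoking Theorem~\ref{mt-1}.
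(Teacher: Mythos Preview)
Your argument is correct and follows essentially the same route as the paper: combine the lower bound from Theorem~\ref{mt-1} with the upper bound obtained from Cheng's estimate $\lambda_{\eta(g)}(X_g)\le \tfrac14+\eta(g)^2\cdot 16\pi^2/\diam^2(X_g)$ together with the Gauss--Bonnet/area comparison yielding $\diam(X_g)\ge C\ln g$. Your additional remarks---that $\eta(g)=o(\ln g)$ forces $\eta(g)\le 2g-2$ eventually so that Theorem~\ref{mt-1} applies, and the explicit derivation of the diameter bound via $\cosh\diam(X_g)\ge 2g-1$---simply spell out details the paper leaves implicit.
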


\noindent For $\eta(g)=1$, both Theorem \ref{mt-1} and Corollary \ref{mt-2} are due to Hide--Magee \cite[Corollary 1.3]{HM21}, in which they used a probabilistic method to solve the conjecture (e.g. see \cite{Buser84, BBD88}) that there exists a sequence of closed hyperbolic surfaces with first eigenvalues tending to $\frac{1}{4}$ as the genus goes to infinity.

The following result is important in the proof of Theorem \ref{mt-1}, which we list out for independent interest. The proof is highly motivated by work of Burger--Buser--Dodziuk \cite{BBD88} where they studied the case  when the limiting surface is connected (e.g. see Theorem \ref{BBD-conn}). 
\begin{proposition}[Min-Max Principle]\label{mmp}
Let $X_{g}(0) \in \partial \sM_{g}$ be the limit of a family of Riemann surfaces $\{X_{g}(t)\}$ obtained by pinching certain simple closed geodesics such that $X_{g}(0)$ has $k$ connected components, i.e., $X_{g}(0)=Y_1\sqcup Y_2 \cdots \sqcup Y_k$ where $k\geq 2$. Let $\lambda_{1}(Y_1), \cdots, \lambda_{1}(Y_k)$ be the first non-zero eigenvalue of $Y_1, \cdots, Y_k$ (if $Y_{i}$ has no discrete eigenvalues then denote $\lambda_{1}(Y_{i})=\infty$) and denote $\bar \lambda_{1}(*)=\min\left\{\lambda_{1}(*), \frac{1}{4}\right\}$ for $*=Y_1,\cdots, Y_k$.  Then 
$$
\liminf_{t\rightarrow 0} \lambda_{k}(X_g(t))\geq \min\limits_{1\leq i \leq k} \{\bar \lambda_{1}(Y_i)\}. 
$$
\end{proposition}
\begin{rem*}
Each component $Y_i$ in the proposition above is a complete open hyperbolic surface of finite volume, whose spectrum consists of possibly discrete eigenvalues and the continuous spectrum $[\frac{1}{4},\infty)$. Therefore in the statement above, $\bar \lambda_{1}(Y_i)$ is the non-zero minimum of spectrum of $Y_i$. 
\end{rem*}

\subsection*{Proof sketch of Theorem \ref{mt-1}.} In the proof of Theorem \ref{mt-1}, we will apply Proposition \ref{mmp} to the case when all $\bar \lambda_{1}(Y_i)'s$ are close to $\frac{1}{4}$. The main idea is the following: for each $\eta(g)$ we construct a sequence of genus $g$ surfaces that degenerate into $\eta(g)$ components using only pieces that are known to have the first eigenvalue close to $1/4$. Then by the Min-Max principle, the $\eta(g)$-th eigenvalue of these surfaces will be close to $1/4$. On the other hand, by a result of Schoen--Wolpert--Yau (see~Theorem \ref{SWY-80}) the $(\eta(g)-1)$-th eigenvalue is close to 0. This way we find sequences of surfaces that achieve the spectral gap of $1/4$. For the regime $\eta(g)>g$, the components used in the construction only include the thrice-punctured sphere and a twice-punctured torus. On the other hand, for $\eta(g)\leq g$, the essential components also include a large genus piece that relies on the work of Hide--Magee \cite{HM21}. 

\subsection*{Plan of the paper.} Section \ref{pre} will first discuss properties of the boundary degeneration of the Riemann moduli spaces; then we will provide a review of the background and recent developments on spectral gaps on hyperbolic surfaces, including a list of punctured surface components with eigenvalue bounds which will be used in the degeneration limits. In Section \ref{sec-mmp} we will provide a proof for Proposition \ref{mmp} regarding the Mini-Max Principle for eigenvalues of degenerating hyperbolic surfaces and a few immediate applications. In Section \ref{sec-mt-1} we will complete the proof of Theorem \ref{mt-1}.

\subsection*{Acknowledgement}
The authors would like to thank the anonymous referees for their careful reading and valuable comments for improving the quality of this paper. The first and second named author would like to thank Yuxin He, Yang Shen and Yuhao Xue for helpful discussions on this project. The first named author is partially supported by the NSFC grant No. $12171263$. The third named author is supported by NSF DMS-2041823.

\section{Preliminaries}\label{pre}
\subsection{Boundary of the Riemann moduli spaces}
Denote by $\sM_{g,n}$ the moduli space of hyperbolic surfaces of genus $g$ with $n$ punctures, and $\sM_{g}:=\sM_{g,0}$ the moduli space of compact hyperbolic surfaces with genus $g$. It is well-known that $\dim_{\R}(\sM_{g,n})=6g+2n-6$. In particular, $\sM_{0,3}$ contains only one point represented by the hyperbolic thrice-punctured sphere. The Deligne--Mumford compactification of $\sM_{g,n}$ is by adding nodal surfaces into $\sM_{g,n}$, which is homeomorphic to the completion of $\sM_{g,n}$ endowed with the Weil--Petersson metric. 
Let $\partial \sM_{g,n}$ be the boundary of the Deligne--Mumford compactification of $\sM_{g,n}$. Recall that $\partial \sM_{g,n}$ is stratified, and each stratum of $\psM_{g,n}$ is a product of lower dimensional moduli spaces. Points in $\partial \sM_{g,n}$  are represented by hyperbolic nodal surfaces in $\sM_{g,n}$ (see for example \cite{Masur76} for more detailed description on the completion of $\sM_{g,n}$). Locally the process of pinching a simple closed geodesic into a pair of cusp points can be written with respect to hyperbolic collar coordinates $(\rho, \theta)$: 
where $\ell$ is the length of the central geodesic circle. As $\ell\rightarrow 0$, the hyperbolic neck degenerates into a pair of cusps, which can be seen with the choice of the correct coordinates (see for example~\cite{Masur76, Ji93}). Another way to see this would be using the complex ``plumbing'' coordinates, which we will not . Hyperbolic nodal surfaces are obtained by pinching certain disjoint geodesic circles, and 
we call such family of hyperbolic metrics approaching nodal surfaces a degenerating family (see e.g.~\cite{Wolpert90}, and see Figure \ref{f:df} for an example).

	\begin{figure}
		\centering
		\includegraphics[width=8cm]{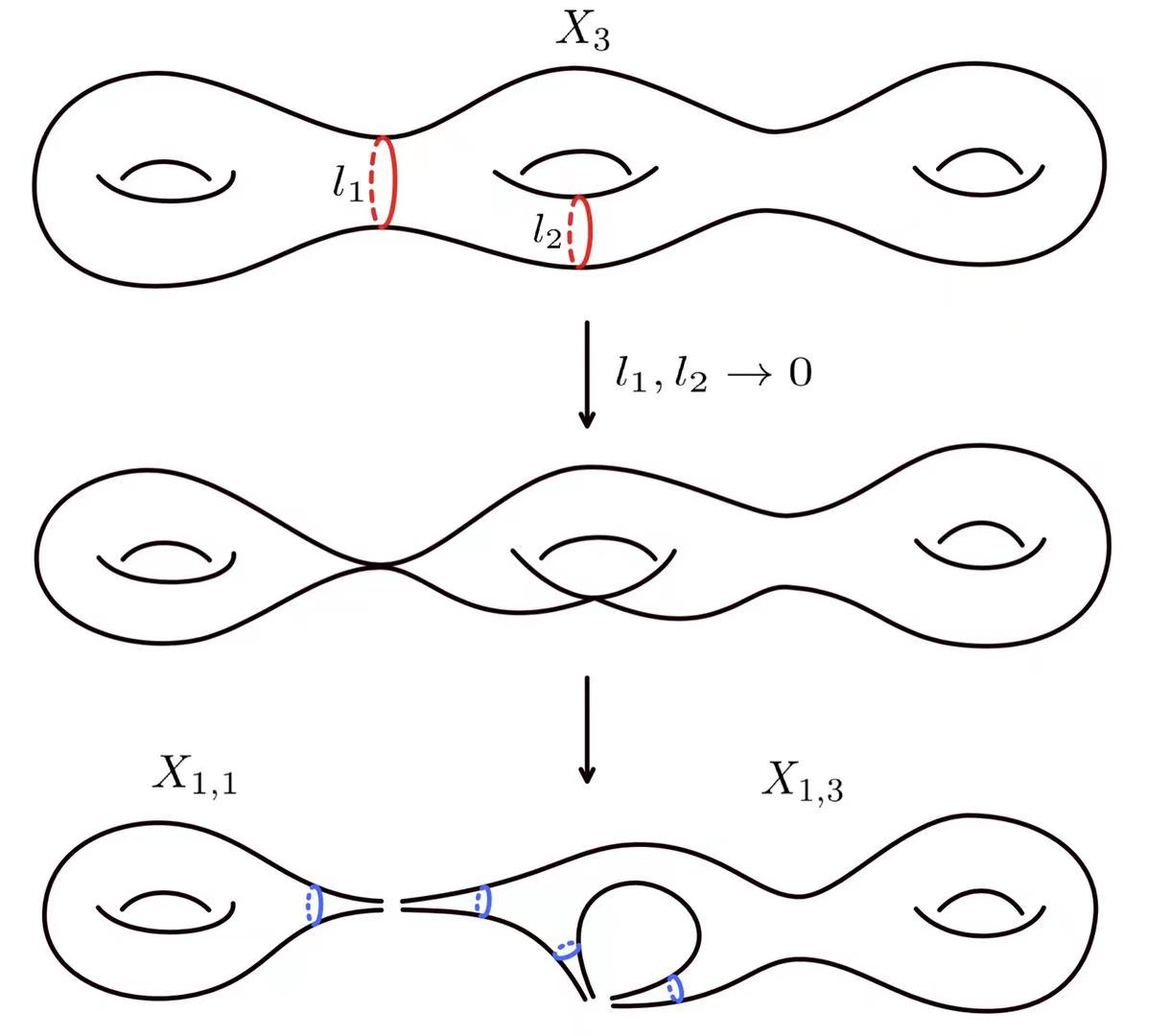}
		\caption{An example of a degenerating family in $\sM_3$ whose limit is $X_{1,1}\sqcup X_{1,3}$ which is disconnected.}
		\label{f:df}
	\end{figure}

We also recall  the  Collar  Lemma on structures  of  disjoint hyperbolic  collars  around short  geodesics, which will be useful later in decomposing the surfaces.  
%and  cusps  near  punctures  of  any  hyperbolic  Riemann  surfaces. 
\begin{lemma}[Collar Lemma]{\cite[Theorem 4.1.1]{Buser10}} \label{cl}
Let $\gamma_1 , \gamma_2, ..., \gamma_m$ be disjoint simple closed geodesics on a closed hyperbolic Riemann surface $X_g$, and $\ell(\gamma_i)$ be the length of $\gamma_i$. Then $m\leq 3g-3$ and we can define the collar of $\gamma_i$ by
	$$T(\gamma_i)=\{x\in X_g; \ \mathrm{dist}(x,\gamma_i)\leq w(\gamma_i)\}$$
	where
	\begin{equation}\label{e:w}
	w(\gamma_i)=\mathop{\rm arcsinh} \frac{1}{\sinh \left(\frac{1}{2}\ell(\gamma_i)\right)}
	\end{equation}
	is the half width of the collar.
	
	Then the collars are pairwise disjoint for $i=1,...,m$. Each $T(\gamma_i)$ is isomorphic to a cylinder $(\rho,\theta)\in [-w(\gamma_i),w(\gamma_i)] \times \mathbb S ^1$, where $\mathbb S ^1 = \mathbb{R} / \mathbb{Z}$, with the metric
\bear\label{collar-metric}	
ds^2=d\rho^2 + \ell(\gamma_i)^2 \cosh^2\rho d\theta^2.
\eear
	And for a point $(\rho,\theta)$, the point $(0,\theta)$ is its projection on the geodesic $\gamma_i$, $|\rho|$ is the distance to $\gamma_i$, $\theta$ is the coordinate on $\gamma_i \cong \mathbb S ^1$.
\end{lemma}

\noindent As the length $\ell(\gamma)$ of the central closed geodesic goes to $0$, the width $ w(\gamma)\sim \ln{\left(\frac{1}{\ell(\gamma)}\right)}$ which tends to infinity. As an easy corollary,
\begin{corollary}\label{cl-d}
For a degenerating family of hyperbolic surfaces $\{X_{g}(t)\}$, the diameter satisfies
$$\diam(X_g(t))\to \infty.$$
\end{corollary}

The following two lemmas  will be useful in the proof of Theorem \ref{mt-1}.

\begin{lemma}\label{d-f-g-2}
For each integer $\eta(g) \in [g-1,2g-2]$ where $g\geq 2$, there exist two non-negative integers $i$ and $j$ such that
\ben
\item $i+j=\eta(g)$;
\item $\underbrace{\sM_{0,3}\times\cdots\times\sM_{0,3}}_{\text{$i$ copies}} \times \underbrace{\sM_{1,2}\times\cdots\times\sM_{1,2}}_{\text{$j$ copies}} \subset \psM_{g}$.
\een
\end{lemma}
\begin{rem*}
Here $i$ and $j$ depend on $g$, and satisfy $i+2j=2g-2$ by the additivity of Euler characteristic. 
\end{rem*}

\bp
If $\eta(g)=2g-2$, the conclusion is obvious by choosing $i=2g-2$ and $j=0$, which is obtained by pinching $3g-3$ disjoint simple closed curves in a closed surface $X_{g}$ of genus $g$. 

Now we assume that $g\leq \eta(g)\leq 2g-3$. Given a closed surface $X_g$ of genus $g$, first one may pinch $X_g$ along $2$ disjoint simple closed curves $\sigma_1$ and $\sigma_2$ such that $X_g\setminus (\sigma_1\cup \sigma_2)$ has two connected components $X_{g_1,2}\sqcup X_{g_2,2}$, where $g_1, g_2$ are two non-negative integers satisfying $g_1+g_2=g-1$. Here we choose
$$
g_{1}=(2g-2)-\eta(g), \ g_{2}=\eta(g)-(g-1).
$$
For the second step, we pinch $X_{g_1,2}$ along $(g_1-1)$ disjoint simple closed curves  $\{\gamma_l\}_{1\leq l\leq g_1-1}$ such that the complement decomposes further into $g_{1}$ components
 $$X_{g_1,2}\setminus \bigcup\limits_{1\leq l \leq g_1-1}\gamma_l= \underbrace{X_{1,2}\sqcup \cdots \sqcup X_{1,2}}_{\text{$g_1$ copies}}.$$
 For $X_{g_2,2}$, one may pinch along $(3g_2-1)$ disjoint simple closed curves  $\{\gamma_m'\}_{1\leq m\leq 3g_2-1}$ such that the complement $$X_{g_2,2}\setminus \bigcup\limits_{1\leq m \leq 3g_2-1}\gamma_m'= \underbrace{X_{0,3}\sqcup \cdots \sqcup X_{0,3}}_{\text{$2g_2$ copies}}.$$ 
Pinching all these simple closed curves during cutting above to $0$, then the conclusion follows since

\begin{equation} \label{e:ij1}
i=2g_2=2\eta(g)-(2g-2) \text{ and } j=g_1=(2g-2)-\eta(g).
\end{equation}
 (For example, see Figure~\ref{f:F1}).

	\begin{figure}[ht]
		\centering
		\includegraphics[width=6cm]{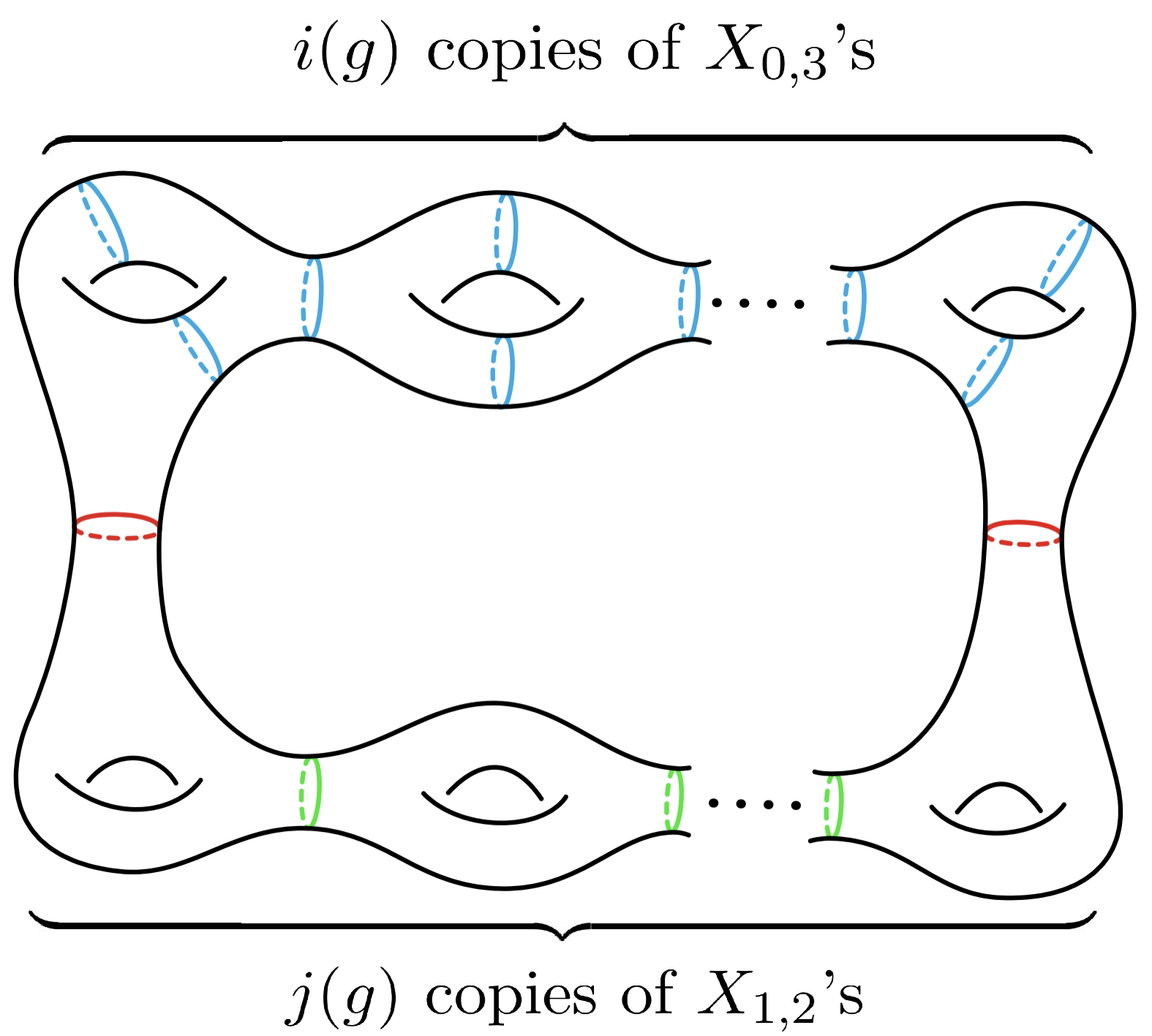}
		\caption{An example of the degeneration of a genus $g$ surface into $i(g)$ copies of $X_{0,3}$'s and $j(g)$ copies of $X_{1,2}$'s by pinching all the simple geodesics marked in the picture}
		\label{f:F1}
	\end{figure}

If $\eta(g)=g-1$, we first pinch $X_g$ along a non-separating simple close curve to get a surface $X_{g-1,2}$. Then in the same way as we did with $X_{g_1,2}$ in the previous case, we pinch $X_{g-1,2}$ along $(g-2)$ disjoint simple closed curves to get $(g-1)$ copies of $X_{1,2}$'s. Then the conclusion follows  where $i=0$ and $j=g-1$. 

Combining the three cases above, the proof is complete.
\ep

\begin{lemma}\label{d-f-g-1}
For each integer $\eta(g) \in [2,g]$ where $g\geq 3$, there exist three non-negative integers $g_1$, $i$ and $j$ such that
\ben
\item $2g_1\geq g-2$;

\item $i+j+1=\eta(g)$;

\item $\underbrace{\sM_{0,3}\times\cdots\times\sM_{0,3}}_{\text{$i(g)$ copies}} \times \underbrace{\sM_{1,2}\times\cdots\times\sM_{1,2}}_{\text{$j(g)$ copies}} \times \sM_{g_1,2}\subset \psM_{g}$.
\een

\end{lemma}
\begin{rem*}
Similar to the previous lemma, $i, j$ and $g_{1}$ depend on $g$. 
By counting the Euler characteristics, those numbers should satisfy $i+2j+2g_1=2g-2$.
\end{rem*}

\bp 
Similar as in the proof of Lemma \ref{d-f-g-2} above we first decompose $X_g$ as $X_g\setminus (\sigma_1\cup \sigma_2)=X_{g_1,2}\sqcup X_{g_2,2}$ for two disjoint simple closed curves $\sigma_1$ and $\sigma_2$ where $g_{1}$ and $g_{2}:=g-1-g_{1}$ will be determined in different cases below. Next we decompose $X_{g_2,2}$ into disjoint union of $i$ copies of $X_{0,3}$'s and $j$ copies of $X_{1,2}$'s to obtain the desired properties. For an illustration, see Figure~\ref{f:F2}.

	\begin{figure}[ht]
		\centering
		\includegraphics[width=7cm]{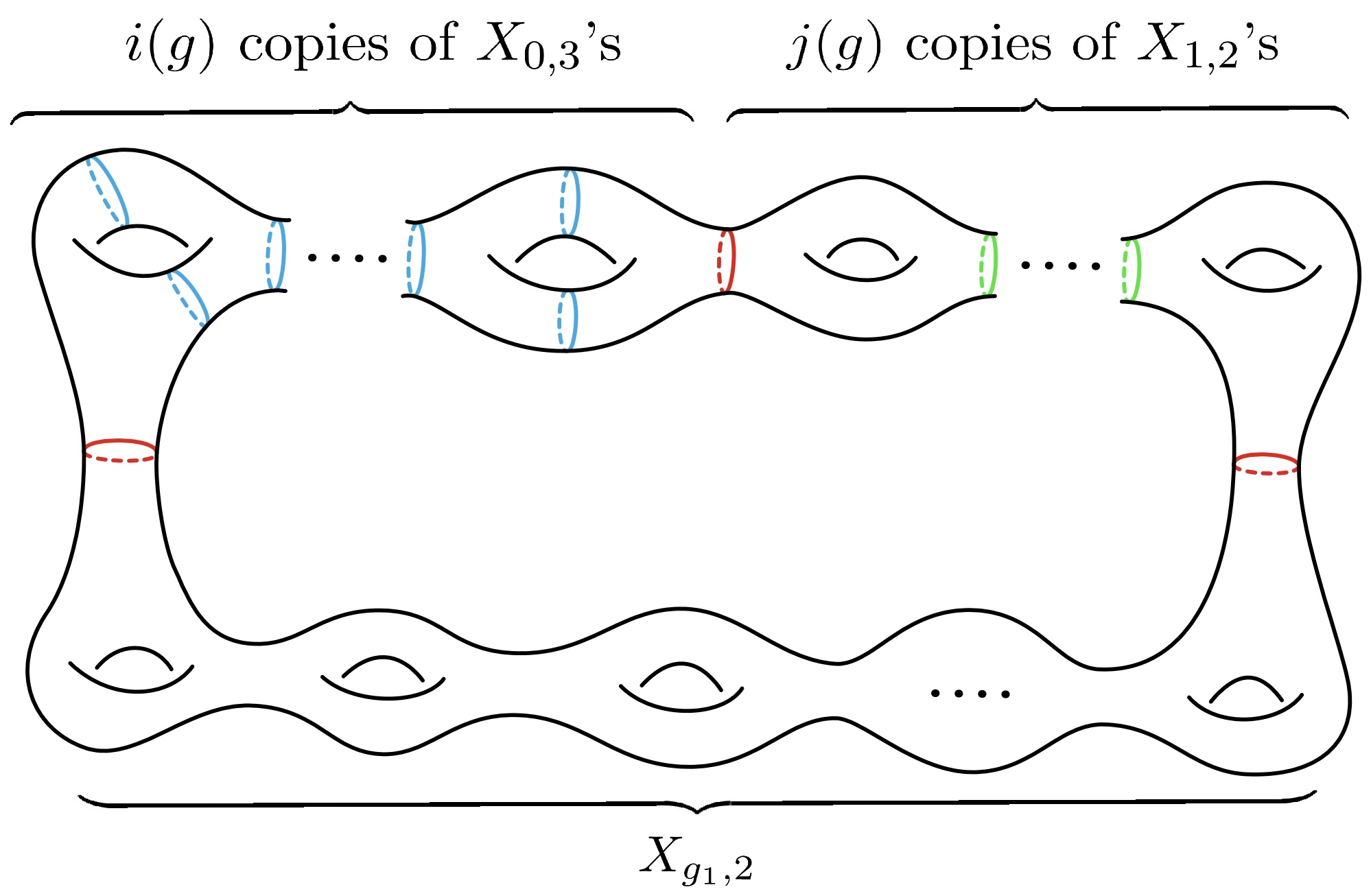}
		\caption{An example of decomposing a surface of genus $g$ into $i$ copies of $X_{0,3}$'s, $j$ copies of $X_{1,2}$'s and a copy of $X_{g_{1},2}$, where $i, j, g_{1}$ are given in the proof of Lemma~\ref{d-f-g-1}.}
		\label{f:F2}
	\end{figure}

The proof contains the following three cases.\vspace{.1in}

 \emph{Case $1$: $2\leq \eta(g)\leq \frac{g}{2}+1$.} The conclusion follows by choosing 
\[i=0, \ j=\eta(g)-1 \ \textit{and} \ g_1=g-\eta(g).\]

 \emph{Case $2$: $\frac{g}{2}+1<\eta(g)\leq g$ and $\eta(g)$ is odd.} The conclusion follows by choosing 
\[i=\eta(g)-1, \ j=0 \ \textit{and} \ g_1=g-\frac{1+\eta(g)}{2}.\]

 \emph{Case $3$: $\frac{g}{2}+1<\eta(g)\leq g$ and $\eta(g)$ is even.} The conclusion follows by choosing 
\[i=\eta(g)-2, \ j=1 \ \textit{and} \ g_1=g-1-\frac{\eta(g)}{2}.\]

The proof is complete.
\ep

%\begin{rem*}
%The two proofs above provide a complete procedure to ``cut'' the surface into suitable number of pieces, which shows how to get the number $i, j$ (and $g_{1}$). Alternatively, one can start with the numbers $i,j, g_{1}$ listed in the proof and construct the family of surfaces by gluing together the small pieces with boundary lengths going to 0.
%\end{rem*}

\subsection{Eigenvalues of hyperbolic surfaces}
The study of eigenvalues of the Laplacian on hyperbolic surfaces has a long history and has recently seen many progress. For a compact hyperbolic surface, the eigenvalues are discrete. On the other hand, when the hyperbolic surface degenerates to one with cusps, by \cite{LP82} it is known that the spectrum is no longer discrete, rather it consists of a continuous spectrum $[\frac{1}{4}, \infty)$ and (possibly) additional discrete eigenvalues. The study of spectral degeneration has seen many developments, see~\cite{Hejhal90, Ji93, JiZworski, Wolpert87, Wolpert92} for some of the earlier works. 

An eigenvalue of a hyperbolic surface is said to be ``small'' if it is below $1/4$ where the number $1/4$ shows up as the bottom of the continuous spectrum of a hyperbolic surface with cusps. The questions of existence of eigenvalues below $1/4$ for both noncompact and compact hyperbolic surfaces not only arise in the field of spectral geometry, but also has deep relations to number theory regarding arithmetic hyperbolic surfaces, dating back to Selberg's famous $3/16$ theorem~\cite{Selberg65} and we refer to~\cite{GJ78, LRS95, Kim03} for more recent developments.  Regarding the estimates and multiplicity counting of small eigenvalues, the history goes back to McKean~\cite{McKean72, McKean74}, Randol~\cite{Randol74}, Buser~\cite{Buser-82, Buser84}. Recently there has been many developments, see~\cite{BM01, OR09, Buser10, Mondal15, BMM16, BMM17, BMM18} and references therein for more complete reference. Among these there are two classical results of particular relevance to our current work. One is regarding bounds of eigenvalues on degenerating hyperbolic surfaces by Schoen--Wolpert--Yau~\cite{SWY80}:
\begin{theorem}[Schoen--Wolpert--Yau '80]\label{SWY-80}
For any compact hyperbolic surface $X_{g}$ of genus $g$ and integer $i\in (0, 2g-2)$, the $i$-th eigenvalue satisfies
$$
\alpha_{i}(g)\cdot \ell_{i} \leq \lambda_{i} \leq \beta_{i}(g) \cdot \ell_{i}
$$
and 
$$
\alpha(g) \leq \lambda_{2g-2}
$$
where $\alpha_{i}(g)>0$ and $\beta_{i}(g)>0$ depend only on $i$ and $g$, $\alpha(g)>0$ depends only on $g$, and $\ell_{i}$ is the minimal possible sum of the lengths of simple closed geodesics in $X_{g}$ which cut $X_{g}$ into $i+1$ connected components. 
\end{theorem}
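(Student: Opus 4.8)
The plan is to establish the three claims separately. The upper bound $\lambda_{i}\le\beta_{i}(g)\ell_{i}$ is the soft direction. Take a system of pairwise disjoint simple closed geodesics realizing $\ell_{i}$ and cutting $X_{g}$ into connected pieces $\Sigma_{0},\dots,\Sigma_{i}$, and build test functions $\phi_{0},\dots,\phi_{i}$ with pairwise disjoint supports, where $\phi_{j}$ equals $1$ on the bulk of $\Sigma_{j}$ and decreases to $0$ across collar neighborhoods of the bordering geodesics. The collar lemma is the input: around a simple closed geodesic of length $\ell$ there is an embedded hyperbolic collar of width $\approx\log(1/\ell)$ on which a radial cutoff from $0$ to $1$ has Dirichlet energy $\approx\ell/\pi$, so summing over the geodesics bordering $\Sigma_{j}$ gives $\|\nabla\phi_{j}\|_{2}^{2}\lesssim\ell_{i}$, while $\|\phi_{j}\|_{2}^{2}$ is bounded below by a positive constant (each $\Sigma_{j}$ is an incompressible subsurface, so $\area(\Sigma_{j})\ge 2\pi$). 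Since the supports are disjoint, every element of $V:=\mathrm{span}\{\phi_{0},\dots,\phi_{i}\}$ has Rayleigh quotient at most $\max_{j}\|\nabla\phi_{j}\|_{2}^{2}/\|\phi_{j}\|_{2}^{2}$, and Courant--Fischer applied to the $(i+1)$-dimensional space $V$ gives the bound (when $\ell_{i}$ is bounded below one instead invokes any crude upper estimate for $\lambda_{i}$, which is what accounts for the $i,g$-dependence of $\beta_{i}(g)$).

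The lower bound $\alpha_{i}(g)\ell_{i}\le\lambda_{i}$ is the heart of the matter, and I would prove it through the thick--thin decomposition combined with a graph model. Fix the Margulis constant $\varepsilon_{0}$ and cut $X_{g}$ along all simple closed geodesics of length $<\varepsilon_{0}$; this produces thick components $\Omega_{1},\dots,\Omega_{m}$ with $m\le 2g-2$ (each is an incompressible subsurface of Euler characteristic $\le-1$, and these sum to $2-2g$), each with injectivity radius $\ge\delta(\varepsilon_{0})>0$, hence --- by Mumford compactness --- satisfying a uniform Poincar\'e inequality and a uniform lower bound on its first nonzero Neumann eigenvalue. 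Encode the decomposition by a weighted graph $G$ with one vertex per thick component (so at most $2g-2$ vertices), one edge per short collar weighted by its core geodesic length, and vertex masses equal to the areas. When $\lambda_{i}(X_{g})$ lies below a threshold depending only on $\varepsilon_{0}$, the Poincar\'e inequality forces the first $i+1$ eigenfunctions to be close in $H^{1}$ to functions that are locally constant on the thick part; projecting to $G$ then yields $\lambda_{i}(G)\le C(\varepsilon_{0})\lambda_{i}(X_{g})$ for the $i$-th eigenvalue of the mass-weighted graph Laplacian. Conversely $\ell_{i}(X_{g})$ is comparable to the minimal total edge-weight whose removal splits $G$ into $i+1$ components, since once $\ell_{i}<\varepsilon_{0}$ any optimal cut of $X_{g}$ uses only short geodesics, i.e.\ edges of $G$. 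The decisive elementary fact is that on a weighted graph with at most $N=2g-2$ vertices this minimal splitting weight is $\le C(g)\lambda_{i}(G)$: whereas the higher-order Cheeger inequality on arbitrary graphs loses a square root, a bounded vertex count makes the comparison linear, the genus being absorbed into the constant. Chaining the estimates gives $\ell_{i}(X_{g})\le C(g)\lambda_{i}(X_{g})$; when instead $\ell_{i}(X_{g})\ge\varepsilon_{0}$, the same reduction shows directly that $\lambda_{i}(X_{g})$ is bounded below by a constant depending only on $g$.

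Finally, one cannot cut $X_{g}$ into $2g-1$ pieces along disjoint simple closed geodesics (the Euler-characteristic count again), so $\ell_{2g-2}$ is undefined and the two-sided estimate genuinely applies only for $i\in(0,2g-2)$; the bound $\lambda_{2g-2}(X_{g})\ge\alpha(g)$ instead follows from the thick--thin reduction, since $G$ then has at most $2g-2$ vertices and hence at most $2g-3$ positive eigenvalues, so the $(2g-2)$-th eigenvalue of $X_{g}$ cannot be a graph mode and must come from the internal mean-zero spectrum of a thick component or from a non-radial Fourier mode along a collar, both bounded below by a constant depending only on $\varepsilon_{0}$. The main obstacle throughout is the lower bound, and within it the \emph{linear} (rather than quadratic) comparison between $\lambda_{i}$ and $\ell_{i}$: Cheeger's inequality together with the coarea formula only gives $\lambda_{i}\gtrsim\ell_{i}^{2}$, and closing the gap forces one to exploit both the exponentially expanding profile of hyperbolic collars --- so that the conductance across a collar is comparable to its core length, not its square --- and the a priori bound $2g-2$ on the number of thick components. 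Making the thick--thin reduction quantitative (keeping the Poincar\'e errors uniformly small, handling pieces with many or with very short boundary geodesics, and cleanly separating the collar modes seen by $G$ from the rest of the spectrum) is the most delicate bookkeeping.
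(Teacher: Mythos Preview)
This theorem is not proved in the paper: it is quoted as a classical result of Schoen--Wolpert--Yau \cite{SWY80} in the preliminaries, with the remark that Dodziuk--Randol \cite{DR86} later gave an alternative proof. There is therefore no in-paper proof to compare your proposal against.

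That said, your sketch is broadly in the spirit of the Dodziuk--Randol alternative rather than the original Schoen--Wolpert--Yau argument. The upper bound via collar cutoffs and the min--max principle is standard and correct; your capacity computation that a hyperbolic collar around a geodesic of length $\ell$ has Dirichlet energy $\approx\ell$ is the right mechanism. For the lower bound, reducing to a weighted graph on the thick components is exactly how \cite{DR86} proceeds, and your observation that the \emph{linear} (not quadratic) dependence on $\ell_i$ comes from the fact that the collar conductance is $\approx\ell$ --- so the graph edge weights are linear in the geodesic lengths --- together with the a priori bound $\le 2g-2$ on the number of vertices, is the correct diagnosis of why naive Cheeger is not enough. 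Two points are somewhat under-argued: (i) the thick components are surfaces \emph{with boundary}, so ``Mumford compactness'' is not literally the right tool for the uniform Neumann/Poincar\'e bound --- one needs a compactness statement for bordered hyperbolic surfaces with injectivity radius bounded below and bounded topological type; and (ii) the step ``projecting the first $i{+}1$ eigenfunctions to $G$ yields $\lambda_i(G)\le C\lambda_i(X_g)$'' hides the main analytic work, namely controlling the $H^1$-defect between an eigenfunction and its thick-piece averages and showing that the cross terms coming from different collars decouple. These are precisely the ``delicate bookkeeping'' you flag at the end, and they are where the actual content of \cite{DR86} lies.
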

\noindent Dodziuk and Randol in~\cite{DR86} gave an alternative proof of Theorem \ref{SWY-80}, and one may also see Dodziuk--Pignataro--Randol--Sullivan \cite{DPRS85} on similar results for Riemann surfaces with punctures. It was proved by Otal--Rosas \cite{OR09} that the constant $\alpha(g)$ can be optimally chosen to be $\frac{1}{4}$. For large genus $g$, it was recently proved by the first named author and Xue \cite{WX21-L1, WX18} that up to multiplication by a universal constant, $\alpha_1(g)$ can be optimally chosen to be $\frac{1}{g^2}$.

The other result that is relevant is~\cite[Theorem 2.1]{BBD88} regarding the first eigenvalue when the limiting degenerating surface is connected:
\bt[Buser--Burger--Dodziuk '88]\label{BBD-conn}
Let $\{X_g(t)\}\subset \sM_g$ such that $Y=\lim \limits_{t\to 0}X_g(t)\in \partial \sM_g$ is connected. Denote $\lambda_{1}(Y)$ the first nonzero eigenvalue of $Y$ (if $Y$ has no discrete eigenvalues we denote $\lambda_{1}(Y)=\infty$). Then
\[\limsup_{t\to 0}\lambda_1(X_g(t))\geq \bar\lambda_1(Y)=\min\left\{\lambda_1(Y),\frac{1}{4} \right\}.\]
\et
In Section 3 we will give a similar description of $\lambda_k(X_g(t))$ when the limiting surface has $k$ connected components.

Another related direction in this topic is to understand how the genus of the hyperbolic surface, in particular when $g\rightarrow \infty$, affects the eigenvalues via different models of random hyperbolic surfaces. Brooks--Makover \cite{BM04} gave a uniform lower bound on the first spectral gap for their combinatorial model of random surfaces by gluing hyperbolic ideal triangles. In terms of Weil--Petersson random closed hyperbolic surfaces, Mirzakhani \cite{Mirz13} showed that the first eigenvalue goes above $0.0024$ with probability one as $g \rightarrow \infty$. Recently, the first named author and Xue \cite{WX21} improved this lower bound $0.0024$ to be $\frac{3}{16}-\epsilon$, which was also independently obtained by Lipnowski and Wright in \cite{LipWright}. One may also see Hide \cite{Hide21} for similar results on  Weil--Petersson random punctured hyperbolic surfaces, and see Monk \cite{Monk21} for related results. Recently there has also been many exciting development in the case of random covers of both compact and noncompact hyperbolic surfaces, see~\cite{MP20, MNP20, MN20, MN21}. For example, Magee--Naud--Puder \cite{MNP20} showed that a generic covering of a hyperbolic surface has relative spectral gap of size $\frac{3}{16}-\epsilon$, which was improved to $\frac{1}{4}-\epsilon$ by Hide-Magee \cite{HM21} for random covers of punctured hyperbolic surfaces. As an important application, Hide and Magee in \cite{HM21} proved that $\lim\limits_{g\to \infty} \sup\limits_{X_g \in \sM_g} \lambda_1(X_g)= \frac{1}{4}$. This result  provides major inspiration for our current paper. 

One major ingredient of our proof is the existence of punctured surfaces with first eigenvalue close to $1/4$. We summarize those components in the two theorems below:
\begin{theorem}\label{t:0312}
\begin{enumerate}
\item $\lambda_1(X_{0,3})\geq\frac{1}{4}$; 
\item $\mathrm{(Mondal \textit{'}15)}$ There exists a surface $X_{1,2}\in \sM_{1,2}$ such that $\lambda_1(X_{1,2})\geq\frac{1}{4}.$
\end{enumerate}
\end{theorem}
\begin{proof}
The first item is well-known, see for example \cite{OR09} or \cite{BMM16}. The existence of the second item was proved by Mondal in~\cite[Theorem 1.3]{Mondal15}. 
\end{proof}

The third component is from the recent breakthrough by Hide-Magee \cite{HM21}. They use probabilistic method to show that for any $\epsilon>0$, there exists an integer $\delta(\epsilon)>0$ only depending on $\epsilon$ such that for all $g>\delta(\epsilon)$ there exists a $2g$-cover $\mathcal{X}$ of $X_{0,3}$ such that \[\bar\lambda_1(\mathcal{X})=\min\left\{\lambda_1(\mathcal{X}), \frac{1}{4}\right\}>\frac{1}{4}-\epsilon.\]
It is not hard to see that $\mathcal{X}$ must have even number of punctures because the Euler characteristic of $\mathcal{X}$ is equal to $-2g$ which is even. Then one may apply the Handle Lemma of Burger--Buser--Dodziuk \cite{BBD88} (or see \cite[Lemma 1.1]{BM01}) to get
\bt \label{hm-o}
For any $\epsilon>0$ and large enough $g>0$, there exists a hyperbolic surface $\mathcal{X}_{g,2}\in \sM_{g,2}$ such that 
\[\bar \lambda_1(\mathcal{X}_{g,2})=\min\left\{\lambda_1(\mathcal{X}_{g,2}), \frac{1}{4}\right\}>\frac{1}{4}-\epsilon.\]
\et
\bp
For completeness we sketch an outline of the proof. Suppose by contradiction there exists a constant $\epsilon_0>0$ such that
\begin{equation}\label{as-up}
\liminf_{g\to \infty} \sup_{X\in \sM_{g,2}}\lambda_1(X)\leq \frac{1}{4}-\epsilon_0.
\end{equation}
It follows by~\cite{HM21} of Hide-Magee that for any $\epsilon>0$ and large enough $g$, there exists a $2g$-cover $\mathcal{X}$ of $X_{0,3}$ such that $\bar\lambda_1(\mathcal{X})=\min\left\{\lambda_1(\mathcal{X}), \frac{1}{4}\right\}>\frac{1}{4}-\epsilon$. Since the Euler characteristic $\chi(\mathcal{X})=-2g$ is even, one may assume that $\mathcal{X}$ has even number of cusps. As in \cite{BBD88} we can construct a family of hyperbolic surfaces $\{X_{g,2}(t)\}\subset \sM_{g,2}$ such that $$\lim \limits_{t\to 0}X_{g,2}(t)= \mathcal{X} \in \partial \sM_{g,2}.$$ By \cite{LP82} of Lax-Phillips we know that for a hyperbolic surface with cusps, the spectrum below $1/4$ is discrete and only contains eigenvalues. By \eqref{as-up}, for some large $g$ one may assume that $\phi_t$ is the first eigenfunction on $X_{g,2}(t)$ with $\Delta \phi_t=\lambda_1(X_{g,2}(t))\cdot \phi_t$ on $X_{g,2}(t)$. Then one may apply the Handle Lemma of Burger--Buser--Dodziuk \cite{BBD88} (or see \cite[Lemma 1.1]{BM01}) to obtain 
\[\limsup_{t\to 0}\lambda_1(X_{g,2}(t))\geq\bar\lambda_1(\mathcal{X})=\min\left\{\lambda_1(\mathcal{X}), \frac{1}{4}\right\}>\frac{1}{4}-\epsilon \]
which is a contradiction to \eqref{as-up} since $\epsilon>0$ can be chosen to be arbitrarily small.
\ep

\section{Eigenvalues on a family of degenerating Riemann surfaces}\label{sec-mmp}
In this section we will prove the following Min-Max principle:
\begin{proposition}[Min-Max Principle, same as Proposition~\ref{mmp}]
Let $X_{g}(0) \in \partial \sM_{g}$ be the limit of a family of Riemann surfaces $\{X_{g}(t)\}$ obtained by pinching certain simple closed geodesics such that $X_{g}(0)$ has $k$ connected components, i.e., $X_{g}(0)=Y_1\sqcup Y_2 \cdots \sqcup Y_k$ where $k\geq 2$. Let $\lambda_{1}(Y_1), \cdots, \lambda_{1}(Y_k)$ be the first non-zero eigenvalue of $Y_1, \cdots, Y_k$ (if $Y_{i}$ has no discrete eigenvalues then denote $\lambda_{1}(Y_{i})=\infty$) and denote $\bar \lambda_{1}(*)=\min\left\{\lambda_{1}(*), \frac{1}{4}\right\}$ for $*=Y_1,\cdots, Y_k$.  Then 
$$
\liminf_{t\rightarrow 0} \lambda_{k}(X_g(t))\geq \min\limits_{1\leq i \leq k} \{\bar \lambda_{1}(Y_i)\}. 
$$
\end{proposition}

To prove the theorem, we will start by discussing the subsequence limits of eigenfunctions. 
Denote $\phi_{t}\in C^{\infty}(X_{g}(t))$ (one of) the normalized eigenfunction corresponding to $\lambda_{k}(X_{g}(t))$, \emph{i.e.},
$$
\Delta_{X_{g}(t)} \phi_{t}=\lambda_{k}(X_{g}(t))\cdot \phi_t \ \emph{and} \ \int_{X_{g}(t)} |\phi_{t}|^{2} \mathrm{dVol}_{X_{g}(t)}=1.
$$

%\begin{lemma}\label{l:che}
%For any $k\geq 1$ and any $\epsilon>0$, there exists $\delta>0$ such that
%$$
%\lambda_{k}(X_{g}(t))\leq \frac{1}{4}+\epsilon, \forall t\in (0, \delta).
%$$
%\end{lemma}
%\begin{proof}
%By~\cite[Corollary 2.3]{Cheng-75} we know that for any compact hyperbolic surface $X$ there is an upper bound
%\[\lambda_k(X)\leq \frac{1}{4}+k^2\cdot \frac{16 \pi^2}{\diam^2(X)}.\]
%Note that $\diam(X_{g}(t))\rightarrow \infty$ as $t\rightarrow 0$ by Corollary \ref{cl-d}. Therefore for any $\epsilon>0$ there exists $\delta>0$ such that for any $t\in (0, \delta)$,
%$$\lambda_{k}(X_{g}(t))\leq \frac{1}{4}+\epsilon$$
%as desired.
%\end{proof}

By~\cite[Corollary 2.3]{Cheng-75} we know that for any compact hyperbolic surface $X$ there is an upper bound
\[\lambda_k(X)\leq \frac{1}{4}+k^2\cdot \frac{16 \pi^2}{\diam^2(X)}.\]
Note that $\diam(X_{g}(t))\rightarrow \infty$ as $t\rightarrow 0$ by Corollary \ref{cl-d} for any family of degenerating hyperbolic surfaces $\{X_{g}(t)\}$ as described in the proposition above. This gives that for any fixed $k\geq 1$, 
\begin{equation}\label{up-1-4}
\liminf_{t\rightarrow 0} \lambda_{k}(X_{g}(t)) \leq \limsup_{t\rightarrow 0} \lambda_{k}(X_{g}(t))\leq \frac{1}{4}.  
\end{equation}

On the other hand, by Theorem~\ref{SWY-80} we know that the lowest $k-1$ eigenvalues of $X_{g}(t)$ go to 0 when the degenerating limit has $k$ components, while the $k$-th eigenvalue $\lambda_{k}(X_{g}(t))$ stays bounded away from 0. Therefore
\begin{equation}
\liminf_{t\rightarrow 0}\lambda_{k}(X_{g}(t)) >0.
\end{equation}

Now consider 
\begin{equation}\label{e:liminf}
\lambda_{k}(0):=\liminf \limits_{t\rightarrow 0}\lambda_{k}(X_{g}(t)).
\end{equation} 
By the discussion above we know that
\begin{equation}\label{e:lambda0}
0<\lambda_k(0)\leq \frac{1}{4}.
\end{equation}

By the Collar Lemma~\ref{cl}, each $X_{g}(t)$ can be decomposed  into a number of disjoint degenerating hyperbolic necks  and a compact part (which has possibly several connected components). The width of each hyperbolic neck is determined by the central shrinking geodesic $\gamma$ and can be chosen as $(w(\gamma)-1)$ for example, where $w(\gamma)$ is given in~\eqref{e:w}. For the degenerating family $\{X_{g}(t)\}$ with $N$ shrinking geodesics $\{\gamma_{m}(t)\}_{m=1}^{N}$, we denote the width of each hyperbolic neck by the following $N$-tuple:
$$
\vec w:=\left(w(\gamma_{1}(t))-1, w(\gamma_{2}(t))-1, \dots, w(\gamma_{N}(t))-1\right).
$$
Note that $\vec w$ depends on $t$, and each entry in $\vec w$ goes to $\infty$ as $t$ goes to 0. Geometrically each hyperbolic neck degenerates into a pair of cusps. We remark here that in the definition of $\vec w$,  the choice $(w(\gamma)-1)$ is for convenience and can be replaced by $(w(\gamma)-c)$ with any $c>0$.

For any $X_{g}(t)$, we denote the union of all $N$ hyperbolic necks as $C_{\vec w}(t)$. In local hyperbolic geodesic coordinates given by $d\rho^{2}+\ell^{2}\cosh^{2} \rho d\theta^{2}$ where $\ell$ is the length of the central geodesic circle $\gamma_{i}$,
\begin{equation}\label{def-c}
C_{\vec w}(t)=\bigcup_{m=1}^{N}\left\{(\rho, \theta): \ 0\leq |\rho|\leq w(\gamma_{m}(t))-1\right\}.
\end{equation}
In addition, we also denote the union of all ``shells'' near the collars by
\begin{equation}\label{def-s}
S_{\vec w}(t)=\bigcup_{m=1}^{N} \left\{(\rho, \theta): \ w(\gamma_{m}(t))-1\leq |\rho| \leq w(\gamma_{m}(t))\right\}.
\end{equation}
Then it follows by the Collar Lemma that all such collar neighborhoods (resp. shells) are disjoint (see Figure \ref{f:cs} for an illustration of collars and shells for an example).

	\begin{figure}
		\centering
		\includegraphics[width=10cm]{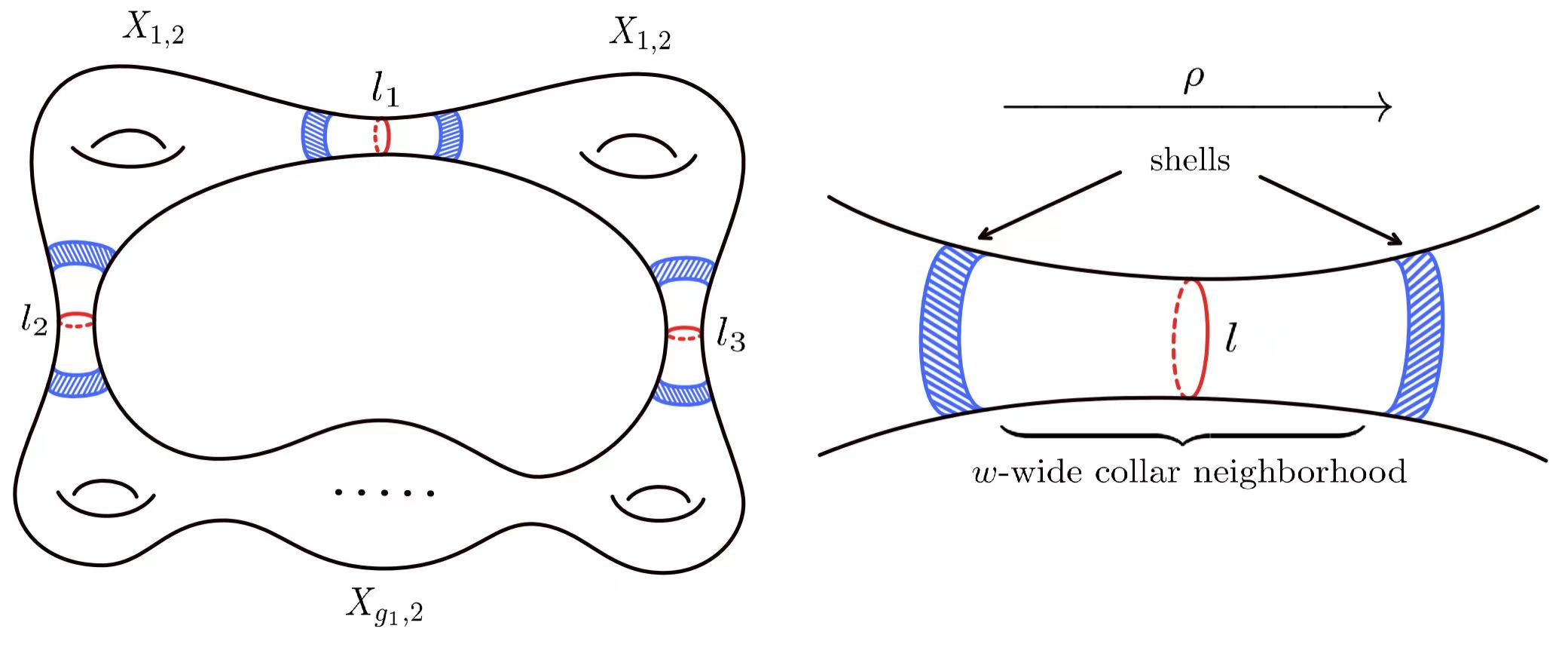}
		\caption{An example of collar neighborhoods and shells.}
		\label{f:cs}
	\end{figure}

Denote the compact part as $F_{\vec w}(t)=X_{g}(t)\setminus C_{\vec w}(t)$. The compact area and nodal degeneration area are grafted together~\cite{Wolpert90, MZ18, MZ19}.  For small $t$, $\{F_{\vec w}(t)\}$ are all diffeomorphic. In particular, the metric on $F_{\vec w}(t)$ can be written by $e^{2u_{t}}g_{0}$ where $g_{0}$ is the metric on $F_{\vec w}(0)$ and $u_{t}$ are polyhomogeneous and uniformly bounded in all derivatives~\cite{MZ19}. That is, we can write the diffeomorphism $D_{t}: F_{\vec w}(t)\rightarrow F_{\vec w}(0)$ such that $g_{t}=D_{t}^{*}g_{0}$ and $D_{t}$ are uniformly bounded. From now on when we consider the convergence of eigenfunctions $\phi(t)$ on $X_{g}(t)$, the functions are all defined on $X_{g}(0)$ via pullback $(D_{t}^{-1})^{*}\phi(t)$, see~\cite{Wolpert92} for similar approaches. See also another related approach via universal covers in~\cite{BBD88}.

Now take a sequence of metrics such that the corresponding eigenvalues approaches $\lambda_k(0)$ which is the defined in~\eqref{e:liminf}. Denote the sequence by $\{X_{g}(t_{i})\}_{i=1}^{\infty}$. By definition,
$$
\lim_{i\rightarrow \infty} t_{i}=0 \ \textit{and} \ \lim_{i\rightarrow \infty} \lambda_{k}\left(X_{g}(t_{i})\right) = \lambda_{k}(0). 
$$
Denote the corresponding eigenfunction on $X_{g}(t_{i})$ by $\phi_{t_{i}}$, and below we discuss the convergence of the sequence of functions $\{\phi_{t_{i}}\}_{i=1}^{\infty}$. One key ingredient is the following Sobolev--G\r{a}rding Inequality on the compact part $F_{\vec w}(t)$. Denote by $\mathrm{inj}(\cdot)$ the injectivity radius function.  Denote by $\nabla^{j} \phi_{t_{i}}$ the $j$-th covariant derivative of $\phi_{t_{i}}$ where $j \in \mathbb{N}$. Then we have the following
\begin{lemma}
For any $x\in F_{\vec w}(t)$, $j\in \mathbb{N}$ and $r<\mathrm{inj}(F_{\vec w}(t))$, there exist a constant $c_{r,j}>0$ and an integer $N_{j}>0$ independent of $x$ such that we have the pointwise bound for any $j$-th derivative:
\begin{equation}\label{e:sg}
|\nabla^{j} \phi_{t}(x)|\leq c_{r,j} \sum_{\ell=0}^{N_{j}} \|\Delta_{X_{g}(t)}^{\ell}\phi_{t}\|_{L^{2}(B_{r}(x))}.
\end{equation}
\end{lemma}
\begin{proof}
This equality was shown in~{\cite[Theorem 2.1]{BBD88}}. The inequality is from the combination of Sobolev inequality and G\r{a}rding inequality, for example see~\cite{BJS}.
\end{proof}
 
With the above inequality we have the following uniform bound on $\{\phi_{t_{i}}\}_{i=1}^{\infty}$ and their derivatives. \begin{lemma}
For any $j\in \mathbb{N}$, 
 $\{\nabla^{j} \phi_{t_{i}}\}_{i}$ is uniformly bounded on any compact set of $X_{g}(0)$. 
\end{lemma}
\begin{proof}	
Using~\eqref{e:sg} in the previous lemma and that $\Delta  \phi_{t}=\lambda_{k}(t)\phi_{t}$, $0<\lambda_{k}(t)<\frac{1}{3}$, we have
$$
|\nabla^{j} \phi_{t}(x)| \leq c_{r,j}\sum_{\ell=0}^{\infty}\left(\frac{1}{3}\right)^{\ell} \|\phi_{t}\|_{L^{2}(X_{g}(t))}\leq 2c_{r,j}
$$
where the bound is independent of $x$.
Hence all derivatives of $\phi_{t}$ (in particular the sequence $\{\phi_{t_i}\}$) are uniformly bounded.
\end{proof}

\begin{lemma}
There exists a subsequence of $\phi_{t_i}$ (denoted by $\phi_{i}$) and $\phi_{0} \in H^{1}(X_{g}(0))$ such that any derivatives satisfy
$$
\nabla^{j} \phi_{i}\rightarrow \nabla^{j} \phi_{0}
$$
uniformly on connected compact set of $X_{g}(0)$. 
\end{lemma}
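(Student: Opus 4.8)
The plan is to combine the uniform pointwise derivative bounds from the preceding lemma with the Arzel\`a--Ascoli theorem and a diagonal extraction over an exhaustion of $X_g(0)$, and then to pin down the global regularity of the limit by Fatou's lemma applied to the $L^2$-normalization and the Rayleigh quotient.

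First I would fix an exhaustion $K_1\subset K_2\subset\cdots$ of $X_g(0)$ by compact connected sets with $\bigcup_m K_m=X_g(0)$, chosen so that each $K_m$ lies in the interior of one of the grafted compact pieces $F_{w_m}(0)$. Via the diffeomorphisms $D_t\colon F_w(t)\to F_w(0)$ the pulled-back functions $(D_{t_i}^{-1})^{*}\phi_{t_i}$ are all defined on a fixed neighbourhood of $K_m$ for $i$ large, and since $g_{t_i}=D_{t_i}^{*}g_0$ with the conformal factors $u_{t_i}$ uniformly bounded in every derivative \cite{MZ19}, derivatives measured in $g_0$ are comparable to those measured in $g_{t_i}$. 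Hence the preceding lemma gives that on each $K_m$ the family $\{\phi_{t_i}\}_i$ together with all of its derivatives is equibounded; the bound on the next derivative makes each $\{\nabla^{(j)}\phi_{t_i}\}_i$ equicontinuous on $K_m$, so Arzel\`a--Ascoli extracts a subsequence converging in $C^{\infty}(K_m)$. Diagonalizing over $m$ produces a single subsequence, still denoted $\phi_i$, and a smooth function $\phi_0$ on $X_g(0)$ so that $\phi_i^{(j)}\to\phi_0^{(j)}$ uniformly on every compact connected subset of $X_g(0)$, for every order of differentiation.

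It then remains to check $\phi_0\in H^1(X_g(0))$. On each $K_m$ the volume densities $\mathrm{dVol}_{g_{t_i}}$ and the pointwise gradient norms converge uniformly to those of $g_0$ (again by the uniform control of $u_t$), so
$$
\int_{K_m}\big(|\phi_0|^2+|\nabla\phi_0|^2\big)\,\mathrm{dVol}_{g_0}=\lim_{i\to\infty}\int_{K_m}\big(|\phi_{t_i}|^2+|\nabla\phi_{t_i}|^2\big)\,\mathrm{dVol}_{g_{t_i}}\le \limsup_{i\to\infty}\Big(1+\lambda_k(X_g(t_i))\Big)\le \frac{5}{4},
$$
using the normalization $\|\phi_{t_i}\|_{L^2(X_g(t_i))}=1$, the identity $\|\nabla\phi_{t_i}\|_{L^2(X_g(t_i))}^2=\lambda_k(X_g(t_i))$ obtained by integrating $\Delta_{X_g(t_i)}\phi_{t_i}=\lambda_k(X_g(t_i))\phi_{t_i}$ by parts on the closed surface $X_g(t_i)$, and $\lambda_k(X_g(t_i))\le \frac14+\epsilon$ from Lemma~\ref{l:che}. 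Since this bound is independent of $m$, letting $m\to\infty$ by monotone convergence gives $\phi_0,\nabla\phi_0\in L^2(X_g(0))$, i.e.\ $\phi_0\in H^1(X_g(0))$ with $\|\phi_0\|_{H^1(X_g(0))}^2\le \frac54$.

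I expect the main technical point to be the bookkeeping around the metric identification: one must check that "derivative of $\phi_{t_i}$" and "$L^2$-mass of $\phi_{t_i}$" are measured consistently after pulling back by $D_{t_i}$, so that passage to the limit on compact sets commutes with both differentiation and integration; this is precisely where the polyhomogeneity and uniform boundedness of the conformal factors $u_t$ from \cite{MZ19} are used. A second point worth flagging is that the lemma only claims $C^{\infty}_{loc}$ convergence and $H^1$-membership — not global convergence, and not that $\phi_0\not\equiv 0$ — because $L^2$-mass can a priori escape into the pinching collars; controlling that escape (so as later to identify $\phi_0$ as a genuine eigenfunction on some component $Y_i$ and to run the Rayleigh-quotient argument in the proof of Proposition~\ref{mmp}) is deferred to the remaining steps.
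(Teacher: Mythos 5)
Your proof is correct and follows essentially the same route as the paper's: you invoke the uniform derivative bounds from the preceding lemma, apply Arzel\`a--Ascoli on an exhaustion by compacta, and diagonalize. The paper's own proof is a two-line sketch of exactly this ("uniform boundedness... diagonal argument"); you have simply spelled out the details, including the metric identification via $D_t$ and the $H^1$-membership of $\phi_0$ (which the paper asserts immediately after the lemma, with the sharper separate bounds $\int|\phi_0|^2\le 1$, $\int|\nabla\phi_0|^2\le 1$ rather than your combined $\|\phi_0\|_{H^1}^2\le 5/4$ — both suffice). Your closing remark about mass potentially escaping into the collars is also consistent with the structure of the paper, which defers that dichotomy to the next proposition.
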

\begin{proof}
Viewing $\{\phi_{t}\}$ as functions on $F_{0}$ where $F_0$ is any connected compact set of $X_g(0)$, by the previous lemma we have uniform boundedness of $\phi_{t}$ and all their derivatives. Hence by Arzela-Ascoli diagonal argument there exists a subsequence $\phi_{i}$ such that the function and its derivative converges uniformly on any compact set. 
\end{proof}
By the convergence above we have
$$
\int_{X_{g}(0)} |\phi_{0}|^{2}\leq 1, \ \int_{X_{g}(0)} |\nabla \phi_{0}|^{2}\leq 1
$$
and 
$$
\Delta_{X_{g}(0)} \phi_{0}=\lambda_{k}(0) \cdot \phi_{0}.
$$

Now we show the following statement regarding the limit $(\lambda_{k}(0), \phi_{0})$. The argument is similar to~\cite[Lemma 9]{WX21-L1} and~\cite[Lemma 3.3]{DPRS85}.
\begin{proposition}
The limit $(\lambda_{k}(0), \phi_{0})$ must satisfy one of the two conditions:
\begin{enumerate}
\item $\phi_{0}$ is an eigenfunction of $\Delta_{X_{g}(0)}$ and also restricts to at least one of the components $Y_{k}$ as an eigenfunction; or
\item $\phi_{0}=0$ everywhere on $X_{g}(0)$ and $\lambda_{k}(0)=\frac{1}{4}$.
\end{enumerate}
\end{proposition}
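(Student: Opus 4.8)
The plan is to prove the proposition by a dichotomy according to whether $\phi_{0}$ vanishes identically, treating the nonvanishing case by an essential self-adjointness argument and the vanishing case by a Rayleigh-quotient estimate on the degenerating part.

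\emph{The case $\phi_{0}\not\equiv 0$.} Here one shows directly that the limit is an honest eigenfunction. We already know $\phi_{0}\in H^{1}(X_{g}(0))$ with $\int_{X_{g}(0)}|\phi_{0}|^{2}\leq 1$ and $\Delta_{X_{g}(0)}\phi_{0}=\lambda(0)\phi_{0}$ weakly, hence, by interior elliptic regularity, classically and smoothly. Since $X_{g}(0)=Y_{1}\sqcup\cdots\sqcup Y_{k}$ is a disjoint union of complete finite-volume hyperbolic surfaces, on each of which the Laplacian is essentially self-adjoint on $C^{\infty}_{c}$, any $L^{2}$ solution of the eigenvalue equation lies in the domain of the self-adjoint Laplacian; thus $\phi_{0}$ is a genuine $L^{2}$-eigenfunction of $\Delta_{X_{g}(0)}$ with eigenvalue $\lambda(0)$. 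Restricting to components, $\phi_{0}|_{Y_{i}}$ is an $L^{2}$-eigenfunction on each $Y_{i}$ (possibly zero), and as $\phi_{0}\not\equiv 0$ at least one restriction $\phi_{0}|_{Y_{i}}$ is nonzero; for that index this gives alternative (1), and in particular $\lambda(0)\geq \lambda_{1}(Y_{i})$, which is the fact that will actually be used afterwards.

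\emph{The case $\phi_{0}\equiv 0$.} By \eqref{e:lambda0} we have $0<\lambda(0)\leq \tfrac14$, so it suffices to show $\lambda(0)\geq \tfrac14$; the mechanism is that the unit $L^{2}$-mass of the $\phi_{t_{i}}$ must entirely escape into the pinching collars and the emerging cusp neighborhoods, where the Rayleigh quotient is nearly $\tfrac14$. Fix $\epsilon>0$, and choose a compact exhausting set $K\subset X_{g}(0)$ obtained by deleting from each $Y_{i}$ the deep parts of its cusp neighborhoods; pulling $K$ back by the diffeomorphisms $D_{t}$ gives $K_{t}\subset X_{g}(t)$ such that, for $t$ small, $X_{g}(t)\setminus K_{t}$ is a disjoint union of (i) the (long and thin, once $t$ is small) outer halves of the pinching collars, each $C^{0}$-close to a model hyperbolic cusp neighborhood, and (ii) cusp-neighborhood approximants of the genuine cusps of the $Y_{i}$. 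A standard Fourier/Hardy estimate on a cusp neighborhood $\{y\geq a\}/(x\mapsto x+1)$ gives $\int|\nabla f|^{2}\geq \tfrac14\int|f|^{2}$ for $f$ vanishing on $\{y=a\}$, with the same inequality up to an error $o(1)$ on a thin collar; hence, for $K$ large and $t$ small,
\[\int_{X_{g}(t)\setminus K_{t}}|\nabla f|^{2}\ \geq\ (\tfrac14-\epsilon)\int_{X_{g}(t)\setminus K_{t}}|f|^{2}\]
for every $f$ vanishing on $\partial K_{t}$. Now pick a cutoff $\chi_{t}$ with $\chi_{t}\equiv 1$ on the deep part of $X_{g}(t)\setminus K_{t}$, $\chi_{t}\equiv 0$ on $K_{t}$ together with a fixed collar of $\partial K_{t}$, and $|\nabla\chi_{t}|$ bounded, and apply the above to $f=\chi_{t}\phi_{t_{i}}$. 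Expanding $|\nabla(\chi_{t}\phi_{t_{i}})|^{2}$, the two terms containing $\nabla\chi_{t}$ are supported in a fixed compact region on which $\phi_{t_{i}}\to 0$ uniformly (the hypothesis of this case) while $|\nabla\phi_{t_{i}}|$ stays uniformly bounded (by the pointwise estimates established above), so those terms tend to $0$; therefore $\int|\nabla(\chi_{t}\phi_{t_{i}})|^{2}\leq \int_{X_{g}(t)}|\nabla\phi_{t_{i}}|^{2}+o(1)$. Likewise $\int|\chi_{t}\phi_{t_{i}}|^{2}\to 1$ since $\int_{K_{t}}|\phi_{t_{i}}|^{2}\to 0$. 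Combining these with $\int_{X_{g}(t_{i})}|\nabla\phi_{t_{i}}|^{2}=\lambda_{k}(X_{g}(t_{i}))\to\lambda(0)$ yields $\lambda(0)\geq \tfrac14-\epsilon$; letting $\epsilon\to 0$ gives $\lambda(0)=\tfrac14$, which is alternative (2).

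The main obstacle is the uniform Rayleigh-quotient lower bound on $X_{g}(t)\setminus K_{t}$: one must use the grafted/plumbing description of the degenerating metric near the pinched geodesics recalled in the discussion above to see that the relevant thin pieces are uniformly $C^{0}$-close to model cusp neighborhoods, and then run the Hardy-type inequality with errors that are controlled uniformly both in $t$ and in the exhaustion parameter (the cut depth defining $K$). The cross-term bookkeeping in the cutoff step is then routine given the uniform-on-compacts convergence $\phi_{t_{i}}\to 0$ and the pointwise derivative bounds already proved.
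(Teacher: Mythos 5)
Your proof is correct and follows essentially the same strategy as the paper: the dichotomy on whether $\phi_{0}$ vanishes identically, with the vanishing case handled by a cutoff supported in the degenerating thin parts combined with the Dirichlet lower bound $\geq \tfrac14$ on hyperbolic collars, and the uniform convergence $\phi_{t_i}\to 0$ on compacta plus the uniform derivative bounds to dispose of the cutoff cross terms. Two cosmetic remarks: since $X_g$ is closed, all cusps of the $Y_i$ arise from the pinching, so your items (i) and (ii) describe the same regions; and the collar estimate (\cite[Lemma 12]{WX21-L1}, which the paper invokes directly with an explicit shell cutoff) holds exactly on hyperbolic cylinders, so the $o(1)$ slack from comparison to cusp models is not actually needed.
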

\begin{proof}	
If $\phi_{0}$ is not 0 everywhere, then $\phi_{0}\in H^{1}(X_{g}(0))$  and is an eigenfunction. In particular, it must restrict to a non-zero function on at least one component of $X_g(0)$. 

Otherwise suppose $\phi_{0}=0$ everywhere on $X_{g}(0)$, that is, $\phi_{i}\rightarrow 0$ pointwise everywhere. Then following a similar argument as in~\cite[Lemma 9]{WX21-L1} or ~\cite[Lemma 3.3]{DPRS85}, we can show that $\lambda_k(0)\geq \frac{1}{4}$. For completeness we write out the proof in detail here. 

Recall the definition of collar and shell on hyperbolic necks in \eqref{def-c} and \eqref{def-s}. Similar to the definition above, we denote $C_{\vec w}(i)$ is the union of $\vec w$-wide collar neighborhoods near all degenerating geodesic circles on $X_{g}(t_i)$, and $S_{\vec w}(i)$ the union of the ``shells''.  To simplify the argument below, we also denote by $C_{i,m}$ (resp. $S_{i,m}$) the individual hyperbolic neck (resp. shell) with central geodesic circle $\gamma_{m}(i)$ where $1\leq m\leq N$, and denote the corresponding width $w_{i,m}:=w(\gamma_{m}(i))-1$. Hence
$$
C_{\vec w}(i)=\cup_{m=1}^{N}C_{i,m}, \ S_{\vec w}(i)=\cup_{m=1}^{N}S_{i,m}.
$$

Fix any $\epsilon\in (0,1)$ and $\delta \in (0, 1/16)$. We denote $c=1-\epsilon$. Since $\phi_{i}$ converges to 0 uniformly on any compact set, there exists $N_0\in \mathbb{N}$ such that for any $i>N_0$ we have
$$
\int_{C_{\vec w}(i)}|\phi_{i}|^{2} \geq c>0, \ \int_{S_{\vec w}(i)} |\phi_{i}|^{2}<\delta c \ \textit{and} \ \int_{S_{\vec w}(i)} |\nabla\phi_{i}|^{2}<\delta c. 
$$
Define a new function on $C_{\vec w}(i)\cup S_{\vec w}(i)$ by the following: 
$$
\Phi_{i}:=\left\{ 
\begin{array}{ll}
\phi_{i}, & |\rho|\leq w_{i,m};\\
(w_{i,m}+1-|\rho|)\phi_{i}, & w_{i,m}\leq |\rho|\leq w_{i,m}+1.
\end{array}
\right.
$$
Then $\Phi_{i}$ gives a function in $H_{0}^{1}(C_{\vec w}(i)\cup S_{\vec w}(i))$ with $\Phi_{i}|_{\partial(C_{\vec w}(i)\cup S_{\vec w}(i))}=0$. Therefore by applying~\cite[Lemma 7]{WX21-L1} to a union of hyperbolic collars we have
$$
\int_{C_{\vec w}(i)\cup S_{\vec w}(i)} |\nabla \Phi_{i}|^{2} >\frac{1}{4} \int_{C_{\vec w}(i)\cup S_{\vec w}(i)} |\Phi_{i}|^{2}. 
$$
On the other hand we have 
$$
\begin{aligned}
\int_{S_{\vec w}(i)}|\nabla \Phi_{i}|^{2}  &=\sum_{m=1}^{N}\int_{S_{i,m}}|\nabla\big((w_{i,m}+1-|\rho|) \phi_{i}\big)|^{2}\\
&=\sum_{m=1}^{N}\int_{S_{i,m}} \left|\nabla(w_{i,m}+1-|\rho|) \cdot \phi_i+(w_{i,m}+1-|\rho|) \cdot \nabla \phi_i\right|^{2}\\
&\leq \sum_{m=1}^{N}\int_{S_{i,m}} \left(|\phi_i|+(w_{i,m}+1-|\rho|) \cdot |\nabla \phi_i|\right)^{2}\\
&\leq 2\sum_{m=1}^{N}\int_{S_{i,m}}|\phi_i|^{2} + 2\sum_{m=1}^{N}\int_{S_{i,m}}|\nabla \phi_i|^{2}\\
&\leq 4\delta c.
\end{aligned}
$$
Therefore for any $i>N_0$ we have
$$
\begin{aligned}
\int_{C_{\vec w}(i)}|\nabla\phi_{i}|^{2}&= \int_{C_{\vec w}(i)}|\nabla\Phi_{i}|^{2}=\int_{C_{\vec w}(i)\cup S_{\vec w}(i)}|\nabla\Phi_{i}|^{2} -\int_{S_{\vec w}(i)}|\nabla\Phi_{i}|^{2}\\
&\geq  \frac{1}{4} \int_{C_{\vec w}(i)\cup S_{\vec w}(i)} |\Phi_{i}|^{2} -\int_{S_{\vec w}(i)}|\nabla\Phi_{i}|^{2}\\
& \geq \frac{1}{4} \int_{C_{\vec w}(i)} |\phi_{i}|^{2} -\int_{S_{\vec w}(i)}|\nabla\Phi_{i}|^{2}\\
&\geq \frac{1}{4}c -4\delta c\\
&=\frac{1-16\delta}{4}(1-\epsilon)
\end{aligned}
$$
which implies that 
$$\lambda_{k}(X_g(t_i))= \frac{\int_{X_g(t_i)}|\nabla\phi_{t_i}|^{2}}{\int_{X_{g}(t_i)}|\phi_{i}|^{2}}\geq \frac{\int_{C_{\vec w}(i)}|\nabla\phi_{i}|^{2}}{\int_{X_{g}(t_i)}|\phi_{i}|^{2}}\geq \frac{1-16\delta}{4}(1-\epsilon).$$
Since this argument holds for any $\epsilon\in (0,1)$ and $\delta\in (0, 1/16)$, we have that 
$$
\lambda_{k}(0)=\liminf \limits_{i\to \infty} \lambda_{k}(X_g(t_i))\geq \frac{1}{4}. 
$$
On the other hand $\lambda_{k}(0)\leq \frac{1}{4}$ by~\eqref{e:lambda0}, therefore we have $\lambda_{k}(0)=\frac{1}{4}$. 
 \end{proof}

%For notation simplicity, we will start with the case $k=2$ and the proof for higher $k$ is similar.  
%\begin{proposition}\label{mmp2}
%Let $X_{g}(0) \in \partial \sM_{g}$ be the limit of a family of Riemann surfaces $X_{g}(t)$ by pinching two simple closed geodesics such that $X_{g}(0)$ has two connected components $X_{g}(0)=Y_{1}\cup Y_{2}$. Let $\lambda_{i}(t)$ be the $i-th$ nonzero eigenvalue of $X_{g}(t)$. Let $\lambda_{1}^{Y_{i}}$ be the first non-zero eigenvalue of $Y_{i}$ and denote $\bar \lambda^{*}_{1}=\min\{\lambda_{1}^{*}, \frac{1}{4}\}$ for $*=Y_{1}, Y_{2}$.  Then 
%$$
%\liminf_{t\rightarrow 0} \lambda_{2}(X_{g}(t))\geq \min \{\bar \lambda_{1}(Y_{1}), \bar \lambda_{1}(Y_{2})\}. 
%$$
%\end{proposition}
%\begin{proof}[Proof of Proposition~\ref{mmp2}]
%\end{proof}
Now we are ready to prove Proposition~\ref{mmp}.
\begin{proof}[Proof of Proposition~\ref{mmp}]
By the previous proposition, either $\lambda_{k}(0)=\lambda_{1}(Y_{i})$ for at least one of the components $Y_{i}$, or $\lambda_{k}(0)=\frac{1}{4}$, therefore we obtain
$$
\lambda_{k}(0)\geq \min_{1\leq i\leq k} \left\{\min\left\{\lambda_{1}(Y_{i}), \frac{1}{4}\right\}\right\} 
$$
as desired.
\end{proof}

We enclose in this section the following result, which is an easy application of Proposition~\ref{mmp}.
\begin{proposition}\label{mmp-c}
Let $X_{g}(0) \in \partial \sM_{g}$ be the limit of a family of Riemann surfaces $\{X_{g}(t)\}\subset \sM_g$ by pinching certain simple closed geodesics such that $X_{g}(0)$ has $k$ connected components, i.e., $X_{g}(0)=Y_1\sqcup Y_2 \cdots \sqcup Y_k$ for some $k\geq 2$. Assume in addition that $\bar\lambda_1(Y_i)=\min\left\{\lambda_1(Y_i), \frac{1}{4}\right\}\geq \frac{1}{4}$ for all $1\leq i \leq k$ where $\lambda_{1}(Y_i)$ is the first non-zero eigenvalue of $Y_i$.  Then 
$$
\lim_{t\rightarrow 0} \lambda_{k}(X_g(t))=\frac{1}{4}. 
$$
\end{proposition}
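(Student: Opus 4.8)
The plan is to combine the lower bound coming directly from Proposition~\ref{mmp} with a matching upper bound from Lemma~\ref{l:che}, so that the two squeeze $\lambda_k(X_g(t))$ to $\tfrac14$ in the limit. For the lower bound, observe that the hypothesis $\bar\lambda_1(Y_i)=\min\{\lambda_1(Y_i),\tfrac14\}\geq\tfrac14$ forces $\bar\lambda_1(Y_i)=\tfrac14$ for every $i$, hence $\min_{1\le i\le k}\bar\lambda_1(Y_i)=\tfrac14$. Applying Proposition~\ref{mmp} then gives
\[
\liminf_{t\to 0}\lambda_k(X_g(t))\;\geq\;\min_{1\le i\le k}\bar\lambda_1(Y_i)\;=\;\tfrac14.
\]
For the upper bound, Lemma~\ref{l:che} says that for every $\epsilon>0$ there is $\delta>0$ with $\lambda_k(X_g(t))\le \tfrac14+\epsilon$ for all $t\in(0,\delta)$; letting $\epsilon\to0$ yields $\limsup_{t\to0}\lambda_k(X_g(t))\le \tfrac14$.

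Putting these together,
\[
\tfrac14\;\leq\;\liminf_{t\to0}\lambda_k(X_g(t))\;\leq\;\limsup_{t\to0}\lambda_k(X_g(t))\;\leq\;\tfrac14,
\]
so the limit exists and equals $\tfrac14$.

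There is essentially no obstacle here: both ingredients have already been established in this section, and the only content is the elementary observation that the extra hypothesis collapses the ``$\min$'' on the right-hand side of the Min-Max Principle to exactly $\tfrac14$, at which point the general inequality $\liminf\lambda_k\le\tfrac14$ (recorded after Lemma~\ref{l:che}) becomes an equality and upgrades automatically to an honest limit. If one wanted to be careful, the single point worth checking is that Lemma~\ref{l:che} applies to the whole family $\{X_g(t)\}$ and not merely along a subsequence — which it does, since the diameter of $X_g(t)$ tends to $\infty$ as $t\to 0$ by the Collar Lemma, independent of any subsequence choice.
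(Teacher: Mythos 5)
Your proposal is correct and follows essentially the same route as the paper: lower bound from Proposition~\ref{mmp} (using the hypothesis to collapse the minimum to $\tfrac14$) and upper bound from Lemma~\ref{l:che}, then squeeze. No comparison needed.
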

\bp
%First it follows by the Collar Lemma (e.g. see \cite[Theorem $4.1.1$]{Buser10}) that the diameter $\diam(X_g(t))\to \infty$ as $t\to 0$. Thus, by \cite[Corollary 2.3]{Cheng-75} of Cheng we have
From \eqref{up-1-4} we have that
\[\limsup_{t\rightarrow 0} \lambda_{k}(X_g(t))\leq\frac{1}{4}. \]
On the other hand, it follows by Proposition \ref{mmp} that
\[\liminf_{t\rightarrow 0} \lambda_{k}(X_g(t))\geq \min_{1\leq i \leq k} \left\{ \min\left\{\lambda_1(Y_i), \frac{1}{4}\right\}\right\}=\frac{1}{4}. \]

Then the conclusion immediately follows.
\ep

We now prove spectral gaps can be arbitrarily close to $0$ by using this result.
Recall that for all $i\geq 1$ and $X_g\in \sM_g$, the $i$-th spectral gap $\spg_i(X_g)$ of $X$ is defined as $$\spg_i(X_g)\overset{\text{def}}{=}\lambda_i(X_g)-\lambda_{i-1}(X_g).$$
We prove the following:
\begin{proposition}\label{spg=0}
For all $i\geq 1$, $$\inf \limits_{X_g \in \sM_g} \spg_i(X_g)=0.$$
\end{proposition}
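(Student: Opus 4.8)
The statement asserts that for every fixed $i\geq 1$, the infimum of the $i$-th spectral gap $\spg_i = \lambda_i - \lambda_{i-1}$ over $\sM_g$ is $0$. Since $\spg_i \geq 0$ by definition, it suffices to exhibit, for each $\varepsilon > 0$, a surface $X_g \in \sM_g$ with $\lambda_i(X_g) - \lambda_{i-1}(X_g) < \varepsilon$. The natural approach is to produce surfaces on which $\lambda_i$ and $\lambda_{i-1}$ are \emph{both} forced to be small, in fact both arbitrarily close to $0$; then their difference is small a fortiori. The tool for making many low-lying eigenvalues simultaneously is the upper bound in the Schoen--Wolpert--Yau theorem (Theorem~\ref{SWY-80}): if $X_g$ can be cut into $i+1$ connected components by simple closed geodesics of total length $\ell_i$, then $\lambda_i(X_g) \leq \beta_i(g)\cdot \ell_i$.

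\textbf{Key steps.} First I would fix $i$ and pick $g$ large enough that a genus-$g$ surface admits a pants-type decomposition with at least $i+1$ pieces; concretely, for $g \geq i+1$ one can find $i$ disjoint simple closed curves $\gamma_1,\dots,\gamma_i$ whose complement in $X_g$ has exactly $i+1$ connected components (e.g.\ successively peel off one-holed handles or a chain of subsurfaces; cf.\ the constructions in Lemmas~\ref{d-f-g-2} and~\ref{d-f-g-1}). Second, I would consider the family $\{X_g(t)\}$ obtained by pinching all of $\gamma_1,\dots,\gamma_i$, so that along this family the minimal total length $\ell_i(X_g(t)) \to 0$. By the SWY upper bound, $\lambda_i(X_g(t)) \leq \beta_i(g)\cdot \ell_i(X_g(t)) \to 0$ as $t\to 0$, and since $0 \leq \lambda_{i-1}(X_g(t)) \leq \lambda_i(X_g(t))$ we get $\spg_i(X_g(t)) = \lambda_i(X_g(t)) - \lambda_{i-1}(X_g(t)) \leq \lambda_i(X_g(t)) \to 0$. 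Third, for the remaining small values of $g$ (those with $2 \leq g \leq i$, of which there are only finitely many, or even none depending on $i$), I would note either that the same argument applies after choosing $g$ even larger — since the claim is about $\sM_g$ for \emph{each} $g$, one must handle every $g\geq 2$ — so I would instead, for arbitrary $g\geq 2$, just pinch as many disjoint simple closed geodesics as the genus allows: there always exist $3g-3$ disjoint simple closed curves cutting $X_g$ into $2g-2$ pairs of pants, hence for any $i \leq 2g-2$ one can cut into $i+1$ pieces; and for $i > 2g-2$ one simply observes $\lambda_i \leq \lambda_{2g-1}$ and... here one needs a little more care.

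\textbf{The main obstacle.} The genuine subtlety is the range $i \geq 2g-1$ for small genus, where SWY in the stated form no longer directly produces $i+1$ components. For such $i$, however, $\lambda_{i-1}$ and $\lambda_i$ are no longer forced small, so one cannot bound $\spg_i$ by $\lambda_i$. The fix is to pinch a \emph{full} pants decomposition so that $X_g(t)$ degenerates to a union of $2g-2$ thrice-punctured spheres; as $t\to 0$ the surface develops long thin collars, and by Cheng's bound (Lemma~\ref{l:che}) together with the Collar Lemma the diameter blows up, so $\lambda_i(X_g(t)) \to \tfrac14 + i^2\cdot 0$-type control shows $\limsup_{t\to0}\lambda_i(X_g(t)) \le \tfrac14$ is not quite enough — rather one uses that on a union of thrice-punctured spheres glued along long collars the spectrum below $\tfrac14$ accumulates, so consecutive eigenvalues in any fixed range get squeezed together. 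Concretely, one can invoke that the number of eigenvalues below $\tfrac14$ grows (Otal--Rosas / Buser-type counting) while they all lie in the bounded interval $[0,\tfrac14]$, forcing some consecutive pair within the first $i+1$ of them to be within $\varepsilon$; choosing $t$ small and $g$ appropriately then gives $\spg_i < \varepsilon$. I expect writing this last case cleanly — making precise that two consecutive eigenvalues among a growing, bounded collection must be close, and matching indices — to be the part requiring the most attention; the generic case $i\le 2g-2$ is essentially immediate from Theorem~\ref{SWY-80}.
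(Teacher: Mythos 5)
Your Case $i\le 2g-3$ argument is correct and matches the paper: by Theorem~\ref{SWY-80}, pinching a decomposition with at least $i+1$ pieces forces $\lambda_i\to 0$, so $\spg_i\le\lambda_i\to 0$. The trouble is in the remaining range, where your proposal has two genuine gaps.

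First, for $i=2g-2$ your plan of pinching a \emph{full} pants decomposition actually fails. In that degeneration the limit has $2g-2$ components, so Theorem~\ref{SWY-80} forces $\lambda_{2g-3}(X_g(t))\to 0$, while Otal--Rosas gives $\lambda_{2g-2}(X_g(t))\ge \tfrac14$ and Cheng's bound gives $\limsup\lambda_{2g-2}\le\tfrac14$, hence $\lambda_{2g-2}\to\tfrac14$. The gap $\spg_{2g-2}$ therefore tends to $\tfrac14$, not $0$. The paper avoids this by choosing a degeneration with only $2g-3$ components, one of which is a $Z_{1,2}\in\sM_{1,2}$ with $\bar\lambda_1(Z_{1,2})\ge\tfrac14$; then SWY only forces the first $2g-4$ nonzero eigenvalues to vanish, and Proposition~\ref{mmp-c} drives $\lambda_{2g-3}$ up to $\tfrac14$ as well, so both $\lambda_{2g-3}$ and $\lambda_{2g-2}$ converge to $\tfrac14$.

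Second, for $i>2g-2$ the pigeonhole argument you sketch cannot be made to match indices, as you suspect: squeezing $i+1$ numbers into a bounded interval only controls $\min_{1\le j\le i}\spg_j$, not the specific gap $\spg_i$. What you are missing is the Otal--Rosas lower bound $\lambda_{2g-2}(X)\ge\tfrac14$: for $i-1\ge 2g-2$ it gives $\lambda_{i-1}(X_g(t))\ge\tfrac14$, while Cheng's bound (with diameter blowing up along any degenerating family) gives $\limsup_{t\to 0}\lambda_i(X_g(t))\le\tfrac14$. Both $\lambda_{i-1}$ and $\lambda_i$ are therefore squeezed to $\tfrac14$, and $\spg_i\to 0$ with no pigeonhole needed. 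The overall moral is that the paper does not try to make both $\lambda_{i-1}$ and $\lambda_i$ small when $i\ge 2g-2$; instead it makes both of them equal $\tfrac14$ in the limit, using the lower bound of Otal--Rosas and, for the borderline $i=2g-2$, the min--max principle of Proposition~\ref{mmp-c}.
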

\bp
We split the proof into the following three cases.
\vspace{.1in}

\emph{Case-$1$: $1\leq i \leq 2g-3$.} 

One may choose a closed hyperbolic surface $\mathcal{X}_g \in \sM_g$ which is close enough to the maximal nodal surface $\underbrace{X_{0,3}\sqcup\cdots \sqcup X_{0,3}}_{\text{$2g-2$ copies}} \in \partial \sM_g$, then $\lambda_i(\mathcal{X}_g)$ is close to $0$ by Theorem~\ref{SWY-80}. So the conclusion follows for this case.
\vspace{.15in}

\emph{Case-$2$: $i = 2g-2$.}

Let $Z_{1,2}\in \sM_{1,2}$ such that $\bar\lambda_1(Z_{1,2})=\min\left\{\frac{1}{4}, \lambda_1(Z_{1,2})\right\}\geq \frac{1}{4}$ by Theorem~\ref{t:0312}. Recall $\lambda_1(X_{0,3})\geq\frac{1}{4}$ from the same theorem. Let $\{X_g(t)\}\subset \sM_g$ be a family of hyperbolic surfaces such that 
\[\lim \limits_{t\to 0}X_g(t)=\underbrace{X_{0,3}\sqcup\cdots\sqcup X_{0,3}}_{\text{$2g-4$ copies}} \sqcup Z_{1,2}\in \partial \sM_g.\]
Then it follows from Proposition \ref{mmp-c} that
\[\lim \limits_{t\to 0}\lambda_{2g-3}(X_g(t))=\frac{1}{4}.\]
Meanwhile, by \cite[Theorem 2]{OR09} we know that $$\lambda_{2g-2}(X_g(t))\geq \frac{1}{4}.$$ Since $\diam(X_g(t))\to \infty$ as $t\to 0$, by \cite[Corollary 2.3]{Cheng-75} we have that 
\[\limsup \limits_{t\to 0}\lambda_{2g-2}(X_g(t))\leq \frac{1}{4}.\]
Thus, we have
\[\lim \limits_{t\to 0}\lambda_{2g-2}(X_g(t))= \frac{1}{4}.\]
Then the conclusion also follows for this case because
\[\inf \limits_{X_g \in \sM_g} \spg_{2g-2}(X_g)\leq \lim \limits_{t\to 0}\spg_{2g-2}(X_g(t))=0.\]
\vspace{.05in}

\emph{Case-$3$: $i>2g-2$.} 

Let $\{Y_g(t)\}\subset \sM_g$ be a family of hyperbolic surfaces such that 
\[\lim \limits_{t\to 0}Y_g(t)\in \partial \sM_g.\]
Similar as in Case-$2$, by \cite[Theorem 2]{OR09} of Otal--Rosas and \cite[Corollary 2.3]{Cheng-75} of Cheng we have
\[\lim \limits_{t\to 0}\lambda_{i}(Y_g(t))= \frac{1}{4} \ \emph{and} \ \lim \limits_{t\to 0}\lambda_{i-1}(Y_g(t))= \frac{1}{4}.\]
This implies  $\inf \limits_{X_g \in \sM_g} \spg_i(X_g)=0$ for all $i>2g-2$.
\vspace{.1in}

The proof is complete.
\ep

%%%%%%%%%%%%%%%%
\section{Proof of Theorem \ref{mt-1}}\label{sec-mt-1}

%\begin{rem*}
%It would not be surprising to expect that  $\min \limits_{X_g \in \sM_g} \spg_i(X_g)=0$ for all $i\geq 2$. {\color{red}say it's a conjecture?}
%\end{rem*}

Now we are ready to prove Theorem \ref{mt-1}.
\bt[$=$Theorem \ref{mt-1}]\label{mt-1-0}
Let $\{\eta(g)\}_{g=2}^{\infty}$ be any sequence of integers with $\eta(g)\in [1,2g-2]$, then
\[\liminf_{g\to \infty}  \sup_{X_g \in \sM_g} \spg_{\eta(g)}(X_g)\geq \frac{1}{4}.\]
\et

\bp
We will show that for any $\eta(g)$ with sufficiently large $g$, one can find a genus $g$ surface $X_{g}$ with  $\spg_{\eta(g)}(X_g)$ close to $1/4$. To see this, we split the proof into the following four cases.
\vspace{.1in}

\emph{Case-$1$: $\eta(g)=2g-2$.} 

Let $X_g(t): (0, 1)\to \sM_g$ be a family of closed hyperbolic surfaces such that $$\lim \limits_{t\to 0}X_g(t)=\underbrace{X_{0,3}\sqcup\cdots\sqcup X_{0,3}}_{\text{$2g-2$ copies}} \in \partial \sM_g.$$ First by \cite[Theorem 2]{OR09}, $\lambda_{2g-2}(X_g(t))\geq \frac{1}{4}$  for all $t\in (0,1)$. Secondly by Theorem~\ref{SWY-80} we know that $\lambda_{2g-3}(X_g(t))\to 0$ as $t\to 0$. Thus, 
\[\sup_{X_g \in \sM_g} \spg_{2g-2}(X_g)\geq \liminf \limits_{t\to 0}\spg_{2g-2}(X_g(t))\geq \frac{1}{4}.\]\vspace{.05in}

\emph{Case-$2$: $\eta(g)\in [g+1,2g-3]$.} 

First we choose a hyperbolic surface $Z_{1,2}\in \sM_{1,2}$ such that $\bar \lambda_1(Z_{1,2})\geq \frac{1}{4}$ by Theorem~\ref{t:0312}. Recall also that $\lambda_{1}(X_{0,3})\geq \frac{1}{4}$. By Lemma \ref{d-f-g-2} we can construct $X_g(t): (0,1) \to \sM_g$ as a family of closed hyperbolic surfaces such that $$\lim \limits_{t\to 0}X_g(t)=\underbrace{X_{0,3}\sqcup\cdots\sqcup X_{0,3}}_{\text{$i$ copies}} \sqcup\underbrace{Z_{1,2}\sqcup\cdots\sqcup Z_{1,2}}_{\text{$j$ copies}} \in \partial \sM_g$$ where $i$ and $j$ are two non-negative integers satisfying $i+j=\eta(g)$. By Theorem~\ref{SWY-80} we know that $\lim \limits_{t\to 0}\lambda_{\eta(g)-1}(X_g(t))= 0$.  By Proposition \ref{mmp-c} we have
\[\lim_{t\to 0}\lambda_{\eta(g)}(X_g(t))= \frac{1}{4}\]
which implies $$\sup \limits_{X_g \in \sM_g} \spg_{\eta(g)}(X_g)\geq \lim \limits_{t\to 0}\spg_{\eta(g)}(X_g(t))=\frac{1}{4}.$$\vspace{.05in}

\emph{Case-$3$: $\eta(g)\in [2,g]$.} 

Same as Case-$2$ we first choose a hyperbolic surface $Z_{1,2}\in \sM_{1,2}$ such that $\bar \lambda_1(Z_{1,2})\geq \frac{1}{4}$. Let $g_1>0$ be the integer determined in Lemma \ref{d-f-g-1}. Note that $g_{1}$ tends to $\infty$ as $g\to \infty$ because $2g_1\geq g-2$. Then by Theorem \ref{hm-o} we know that for any $\epsilon>0$ and large enough $g>0$, one may choose a hyperbolic surface $\mathcal{X}_{g_1,2}\in \sM_{g_1,2}$ such that 
\[\bar\lambda_1(\mathcal{X}_{g_1,2})>\frac{1}{4}-\epsilon.\] 

Fix any such large $g$, then by Lemma \ref{d-f-g-1} we construct  $X_g(t): (0,1)\to \sM_g$ as a family of closed hyperbolic surfaces such that
$$\lim \limits_{t\to 0}X_g(t)=\underbrace{X_{0,3}\sqcup\cdots\sqcup X_{0,3}}_{\text{$i$ copies}} \sqcup \underbrace{Z_{1,2}\sqcup\cdots\times Z_{1,2}}_{\text{$j$ copies}}\sqcup \mathcal{X}_{g_1,2}  \in \partial \sM_g$$
where $i$ and $j$ are two non-negative integers satisfying $i+j=\eta(g)-1$. By Theorem~\ref{SWY-80} we know that $\lim \limits_{t\to 0}\lambda_{\eta(g)-1}(X_g(t))= 0$. Applying the Min-Max Principle in Proposition \ref{mmp} to this sequence with $k=\eta(g)$ (note that $g$ is a fixed large integer hence $\eta(g)$ is also fixed), 
we have
\[\liminf_{t\to 0}\lambda_{\eta(g)}(X_g(t))\geq \min\{\bar \lambda_1(\sM_{0,3}), \bar \lambda_1(Z_{1,2}), \bar \lambda_1(\mathcal{X}_{g_1,2})\}\geq \frac{1}{4}-\epsilon\]
which implies 
\[\liminf_{g\to \infty}  \sup_{X_g \in \sM_g} \spg_{\eta(g)}(X_g)\geq\frac{1}{4}-\epsilon\]
because $\sup \limits_{X_g \in \sM_g} \spg_{\eta(g)}(X_g)\geq \liminf \limits_{t\to 0}\spg_{\eta(g)}(X_g(t))$. Since $\epsilon>0$ can be arbitrarily small, we have
\[\liminf_{g\to \infty}  \sup_{X_g \in \sM_g} \spg_{\eta(g)}(X_g)\geq\frac{1}{4}.\]
\vspace{.05in}

\emph{Case-$4$: $\eta(g)=1$.} 

This is due to Hide--Magee \cite[Corollary 1.3]{HM21} because $\spg_{1}(X_g)=\lambda_1(X_g)$.
\vspace{.1in}

The four cases above cover all possible $\eta(g)$ and hence complete the proof.
\ep

\begin{rem*}
The method in this paper works for  indices in the range of $[1,2g-2]$ in Theorem~\ref{mt-1}. The restriction comes from the lack of suitable components with $\lambda_{1}$ close to $1/4$ when we constructing the degenerating family. It would be interesting to know that whether the assumption $\eta(g)\in [1,2g-2]$ can be dropped. 
\end{rem*}

We also note that, together with \cite[Corollary 2.3]{Cheng-75}, the proof of Theorem \ref{mt-1} above actually gives that
\bt
For any $0\leq j<i$ where $i=o(\ln(g))$, 
\[\lim_{g\to \infty}  \sup_{X_g \in \sM_g} \left(\lambda_i(X_g)-\lambda_j(X_g)\right))= \frac{1}{4}.\]
\et

%%%%%%%%%%%%%%%%%%%%%%%%%%%%%%%%%%%%%%%%%%%%
\bibliographystyle{amsalpha}
\bibliography{ref}

\end{document}